\theoremstyle{plain}
\newtheorem{Lemma}{Lemma}[section]
\newtheorem{theorem}[Lemma]{Theorem}
\newtheorem{lemma}[Lemma]{Lemma}
\newtheorem{proposition}[Lemma]{Proposition}
\newtheorem{corollary}[Lemma]{Corollary}
\theoremstyle{definition}
\newtheorem{definition}[Lemma]{Definition}
\newtheorem{example}[Lemma]{Example}
\newtheorem{examples}[Lemma]{Examples}
\theoremstyle{remark}
\DeclareMathOperator{\End}{End}
\newcommand{\Cal}[1]{{\mathcal #1}}
\def\cleft{\hbox{[\kern-.16em\hbox{[}}}
\def\cright{\hbox{]\kern-.16em\hbox{]}}}
\newcommand{\Gp}{\mathsf{Gp}}
\newcommand{\Set}{\mathsf{Set}}
\DeclareMathOperator{\Aut}{Aut}
\renewcommand{\phi}{\varphi}
\DeclareMathOperator{\op}{\rm op}
\DeclareMathOperator{\id}{id}
\numberwithin{equation}{section}
\begin{document}

\title{Trusses, ditrusses, weak trusses}

\author{Alberto Facchini}
\address{Dipartimento di Matematica ``Tullio Levi-Civita'', Universit\`a di Padova, 35121 Padova, Italy}
\email{facchini@math.unipd.it}

\keywords{Skew brace; Truss; Skew ring; Yang-Baxter equation; Distributive law. \\ {\small 2020 {\it Mathematics Subject Classification.} Primary 16Y99. Secondary 16T25, 20N99.}}

 \begin{abstract}   In this paper we extend to left skew trusses $(T,+,\circ,\sigma)$ previous work on left skew rings. We had presented a left skew ring as a group $(N,+)$ with two binary operations $\circ$ and $\cdot$ with $\circ$ associative, $\cdot$ left distributive over the addition $+$ of the group, and such that the difference of the two operations $\circ$ and $\cdot$ is the binary operation $\pi_1\colon N\times N\to N$. Here we extend this idea to the left skew trusses introduced in 2019 by Brzezi\'nski, replacing the operation $\pi_1$ with the binary operation $\sigma\pi_1\colon T\times T\to T$. The case where  the semigroup morphism $\lambda^T\colon T\to \End_\Gp(T,+)$ is constant turns out to be particular interesting. We get several canonical category isomorphisms. For instance, we get a category isomorphism between  the category of all left skew trusses $(T,+,\circ,\sigma)$ with $\lambda^T\colon (T,\circ)\to \End_\Gp(T,+)$ a constant semigroup morphism and $\sigma,\lambda^T_0$ image-commuting idempotent endomorphisms
and the category of all associative interchange near-rings. Interchange near-rings were introduced by Edmunds in 2016. When $\sigma$ is an idempotent group endomorphism of the group $(T,+)$ and $\lambda^T\colon (T,\circ)\to \End_\Gp(T,+)$ is a semigroup morphism constantly equal to a group endomorphism $\tau$, we also get a sort of duality exchanging the mappings $\sigma$ and $\tau$.
\end{abstract}

	\maketitle
	
	\section{Introduction}
	
	Left skew braces were introduced by Guarnieri and Vendramin \cite{GV} in the context of the study of set-theoretic solutions of the Yang-Baxter equation.  Left skew braces---or, more precisely, left skew rings \cite{Fac, RumpJAA2019}---can be viewed 
	as modified left near-rings. Specifically, a left skew ring is a triple $(G,+,\circ)$ where $(G,+)$ is a (not necessarily abelian) group, $(G,\circ)$ is a semigroup, and the {\em left skew distributive law} holds, that is, $a\circ (b + c) = (a\circ b)-a+ ( a\circ c)$ for every $a,b,c\in G$. 
In the case of left skew braces, we additionally require that $(G,\circ)$ is a group.

Left skew rings can be also equivalently described as quadruples $(G,+,\circ,\cdot)$, where $(G,+)$ is a group, $\circ $ and $\cdot $ are binary operations, $\circ$ is associative, $\cdot$~is left distributive over $+$, and the difference between $\circ$ and $\cdot$ in the additive group of all binary operations on $G$ equals the operation $\pi_1\colon G\times G\to G$, the projection 
map onto the first factor \cite{Fac}.

The notion of left skew ring was generalized by Brzezi\'nski  to that of a  left skew truss in~\cite{Bre}. A {\em left skew truss} is a quadruple $(T, +,\circ,\sigma)$, where $(T, +) $ is a group, $(T,\circ)$ is a semigroup, $\sigma\colon T\to T$ is a unary operation, and the {\em left skew $\sigma$-distributive law} holds, that is $a\circ (b + c) = (a\circ b)-\sigma(a)+ ( a\circ c)$ for every $a,b,c\in T$.

In this paper, we extend the framework developed for left skew rings in \cite{Fac} to the setting of left skew trusses. We show that any left skew truss can be viewed as a quintuple $(G,+,\circ,\cdot,\sigma)$ where $(G,+)$ is a group, $\circ$ and $\cdot$ are binary operations, $\sigma$ is a unary operation, $\circ$ is associative, $\cdot$~is distributive over $+$,
and the difference $\circ - \cdot$ is the operation $\sigma\pi_1\colon G\times G\to G$ (Subsection~\ref{subs}). 
A particularly nice situation arises when  the mapping $\sigma\colon G\to G$ is an idempotent group endomorphism of the additive group $(G,+)$ (Subsection~\ref{First}). 

For any left skew truss $(T, +,\circ,\sigma)$ and each $a\in T$, let 
$\lambda^T_a\colon T\to T$ be the mapping defined by $
\lambda^T_a(b)=-\sigma(a)+(a\circ b)\ \mbox{\rm for every }b\in T.$ Then: $\lambda^T_a$ is a group endomorphism of $(T,+)$; we have $a\circ 0_T = \sigma(a)$ for all $a\in T$; and $\sigma$ is an idempotent mapping. Moreover, if $\sigma(0_T) = 0_T$, then $\lambda^T_{0_T}$ is necessarily idempotent, and $0_T\circ a = \lambda^T_0(a)$ for all $a\in T$ (Theorems~\ref{2.3} and~\ref{2.3'}).
We define a {\em left ditruss} as a quintuple $(D,+,\sigma,\circ,\cdot)$ where $+$, $\circ$, and $\cdot$ are three binary operations, $\sigma$ is unary, $(D,+)$ is a group, and $\sigma(a)+a\cdot b=a\circ b$ for every $a,b\in D$.
Given a left skew truss $(T, +,\circ,\sigma)$, defining the operation $\cdot $ setting $a\cdot b=-\sigma(a)+a\circ b$ for all $a,b\in T$, yields a left ditruss $(T, +,\sigma, \circ,\cdot)$.

Under mild conditions on a ditruss $(D,+,\sigma,\circ,\cdot)$, we show (Theorems \ref{new} and \ref{2.3'}, and Proposition~\ref{2.3''}):

\noindent (1)  the operation $\circ$ is associative if and only if the operation $\cdot $ is {\em left weakly $\sigma$-associative},  i.e., $$(\sigma(a)+a\cdot b)\cdot c=a\cdot(b\cdot c)\ \mbox{ for all }a,b,c\in D;$$

\noindent (2)  the operation $\cdot$ is distributive if and only if the operation $\circ$ is {\em left skew $\sigma$-distributive}, that is, $a\circ(b+c)=a\circ b-\sigma(a)+a\circ c$ for all $a,b,c\in D$; and 

\noindent (3) $\sigma$ and $\lambda^D_{0_D} $ are commuting idempotent group endomorphisms of the group $(D,+)$. 

\noindent Moreover, left weak $\sigma$-associativity of $\cdot$ is equivalent to requiring that the map
$$\lambda^D\colon D\to \End_\Gp(D,+)$$ is a semigroup homomorphism from $(D,\circ)$ to the semigroup $\End_\Gp(D,+)$ (under composition).

Among semigroup morphisms $\varphi\colon D \to E$ between two semigroups $D\ne\emptyset$ and $E$, 
the most natural are the constant morphisms, i.e.,  the morphisms $\varphi\colon D\to E$ 
for which $\varphi(a)=\varphi(b)$ for all $a,b\in D$. These constant morphisms $D\to E$ are in one-to-one correspondence with the idempotent elements of $E$. 
Studying the case where  the semigroup morphism $\lambda^D\colon D\to \End_\Gp(D,+)$ is constant leads us to a connection with associative interchange near-rings (Section~\ref{Inter}). An  
 {\em interchange near-ring} is a triple $(G,+,\circ)$ where $(G,+)$ is a group and $\circ$ is a binary operation on $G$ satisfying the {\em interchange law}:
$$(w+x)\circ(y+z)=(w\circ y)+(x\circ z)\ \mbox{\rm for every }w,x,y,z\in G.$$ Interchange near-rings were introduced in \cite{Australian}.
Two endomorphisms $\varepsilon, \eta$ of an additive group $G$ are said to be {\em image-commuting} if $\varepsilon(x)+\eta(y)=\eta(y)+\varepsilon(x)$ for all $x,y\in G$. 
By \cite[Theorem~3.5]{Australian}, the interchange near-ring structures on any group $(G,+)$ are all of the form $(G,+, \varepsilon\pi_1+\eta\pi_2)$ for a pair $(\varepsilon,\eta)$ of image-commuting group endomorphisms of 	$(G,+)$. 
We find natural category isomorphisms. The category of all left ditrusses $(D,+,\sigma, \circ,\cdot)$ with $\circ$ associative, constant semigroup morphism $\lambda^D\colon (D,\circ)\to \End_\Gp(D,+)$, and $\sigma$, $\lambda^D_{0}$ image-commuting idempotent endomorphisms of $(D,+)$, is isomorphic to the category of associative interchange near-rings (Theorem~\ref{bjip}).
Furthermore, the functor $$(D,+,\sigma, \circ,\tau\pi_2)\mapsto (D,+,\tau, \circ,\sigma\pi_2)$$ is an involutive categorical automorphism of the category of left ditrusses $(D,+,\sigma,\circ,\cdot)$ with $\circ$ associative and $\lambda^D\colon (D,\circ)\to \End_\Gp(D,+)$ a constant semigroup morphism. 
We define a {\em left weak truss} as a quadruple $(W,+,\cdot,\sigma)$ where:
$(W,+)$ is a group,
$\sigma$ is a unary operation,
$\cdot$ is a binary operation that is left weakly $\sigma$-associative,
and 
$\cdot$ is left distributive:
$$(\sigma(a)+a\cdot b)\cdot c=a\cdot(b\cdot c)\ {\mbox{\rm and }}a(b+c)=ab+ac\ {\mbox{\rm for every }}a,b,c\in W.$$
The category of left skew trusses $(T,+,\circ,\sigma)$ with $\sigma$ an idempotent group endomorphism of $(T,+)$ is canonically isomorphic to the category of left weak trusses $(W,+,\cdot,\sigma)$ with $\sigma$ an idempotent group endomorphism of $(W,+)$ (Theorem~\ref{main}). This isomorphism associates to each left skew truss $(T, +,\circ,\sigma)$ with $\sigma$ an idempotent group endomorphism the left weak truss $(T, +,\cdot,\sigma)$, where $\cdot$ is the binary operation on $T$ defined by $a\cdot b=-\sigma(a)+a\circ b$ for all $a,b\in T$.

\medskip

Throughout this paper, by ``algebra'' we mean ``algebra'' in the sense of Universal Algebra, and we sometimes denote algebras in an informal way. For instance, an additive group will be denoted both in the form $(G,+,-,0_G)$ as one usually correctly does in Universal Algebra and in the more common form $(G,+)$. Similarly, a near-ring will be denoted in both forms $(R,+,-,0_R,\cdot)$ and $(R,+,\cdot)$. Also, when we say that a binary operation $\cdot$ on an additive group $(G,+)$ is ``left distributive'', we mean that $\cdot$ distributes over addition of the group $G$, i.e., that $a\cdot(b+c)=a\cdot b+a\cdot c$ for every $a,b,c\in G$.

		\section{Basic terminology on trusses and idempotent endomappings}\label{4.3}  
	
\subsection{Left skew trusses}\label{SS2.1} The definition of skew trusses is due to Tomasz Brzezi\'nski \cite[Section~2]{Bre}.	A {\sl left skew truss} is
	an algebra $(T, +,-,0_T,\circ,\sigma)$, where $(T, +,-,0_T) $ is a group, $\circ$ is a binary operation on $T$, $\sigma$ is a unary operation, and the following two axioms are satisfied:\medskip 
		
		\noindent (a) ({\em associativity of the operation $\circ$})\qquad\qquad $a\circ (b \circ c) = (a\circ b)\circ c,$ 
		
		\medskip 
		
		and
		
		\medskip 

\noindent (b) ({\em left skew $\sigma$-distributivity})\qquad\qquad $a\circ (b + c) = (a\circ b)-\sigma(a)+ ( a\circ c)$ \qquad\qquad\qquad\qquad(3.1)
		
		\medskip 
		
		\noindent 
		for every $a,b,c\in T$.
		
\bigskip

The first examples of left skew trusses are:

\noindent (1) For the unary operation $\sigma\colon T\to T$ defined by $\sigma(a)=0_T$ for all $a\in T$, left skew trusses $(T, +,-,0_T,\circ,\sigma)$ are exactly left near-rings.

\noindent (2) For $\sigma=\id_T$ (the identity mapping $\id_T\colon T\to T$), left skew trusses $(T, +,-,0_T,\circ,\id_T)$ are exactly the left skew rings studied in \cite{Fac}.

\subsection{Idempotent endomappings}\label{subs} Let $T$ be a set. In \cite[Subsection 4.1]{Fac} we saw that a particularly important binary operation on $T$ is the operation $\pi_1\colon T\times T\to T$ defined by $a\,\pi_1\, b:=a$ for every $a,b\in T$. In fact, the set of all binary operations on $T$ turns out to be a monoid with respect to a suitably defined multiplication of binary operations, and $\pi_1$ turns out to be the identity of this monoid  \cite[Theorem~1.2]{LPOMR}. For any unary operation $\sigma\colon T\to T$ on the set $T$, the binary operation on $T$ naturally corresponding to $\pi_1$  is the operation $\sigma\pi_1\colon T\times T\to T$ defined by $a(\sigma\pi_1)b:=\sigma(a)$ for every $a,b\in T$. It is now important to notice that:

\begin{lemma}\label{1} Let $\sigma\colon T\to T$ be a unary operation on a set $T$. The binary operation $\sigma\pi_1$ on $T$ is associative if and only if the mapping $\sigma$ is idempotent.\end{lemma}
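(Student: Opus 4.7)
The plan is to unfold the definition of $\sigma\pi_1$ on both sides of the associativity identity and read off what is needed.

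First I would fix $a,b,c\in T$ and compute $(a(\sigma\pi_1)b)(\sigma\pi_1)c$ by applying the defining rule $x(\sigma\pi_1)y=\sigma(x)$ twice: the inner bracket becomes $\sigma(a)$, and then applying $\sigma\pi_1$ with first argument $\sigma(a)$ and second argument $c$ yields $\sigma(\sigma(a))=\sigma^2(a)$.

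Next I would compute $a(\sigma\pi_1)(b(\sigma\pi_1)c)$ the same way: the inner bracket $b(\sigma\pi_1)c$ equals $\sigma(b)$, and then $a(\sigma\pi_1)\sigma(b)=\sigma(a)$. Thus associativity of $\sigma\pi_1$ is equivalent to the identity $\sigma^2(a)=\sigma(a)$ holding for every $a\in T$, which is precisely the statement that $\sigma$ is an idempotent map. Both directions follow at once, with the forward direction obtained by specializing $b,c$ arbitrarily (since the right-hand side depends only on $a$), and the reverse by substituting $\sigma^2=\sigma$ into the computation above.

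There is no real obstacle here: the statement is a straightforward verification, and the only thing to be careful about is tracking which argument is the ``first'' one in each application of $\sigma\pi_1$, since the operation ignores its second argument.
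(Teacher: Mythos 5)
Your proposal is correct and is exactly the paper's argument: both compute $a(\sigma\pi_1)(b(\sigma\pi_1)c)=\sigma(a)$ and $(a(\sigma\pi_1)b)(\sigma\pi_1)c=\sigma(\sigma(a))$, so associativity is equivalent to $\sigma^2=\sigma$. Nothing is missing.
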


\begin{proof} For every $a,b,c\in T$, we have $$a\,(\sigma\pi_1)(b\,(\sigma\pi_1)\,c)=\sigma(a)\quad{\mbox{\rm and}}\quad (a\,(\sigma\pi_1)\,b)(\sigma\pi_1)\,c=\sigma(a\,(\sigma\pi_1)\,b)=\sigma(\sigma(a)).$$\end{proof} 

\bigskip

Since we have met with idempotent endomappings $\sigma$ on $T$, that is, mappings $\sigma$ of a set $T$ into itself such that $\sigma^2=\sigma$, let me briefly recall here the notion of semidirect product in Universal Algebra \cite{semi}. 

\bigskip

We can construct an  {\em  internal semidirect-product decomposition} of any algebra $A$ making use of any subalgebra $B$ of $A$ and any congruence $\omega$ on $A$ such that the intersection of $B$ with any congruence class of $A$ modulo $\omega$ is a singleton \cite{semi}. Internal semidirect-product decompositions $A=B \ltimes\omega$ of an algebra $A$ are in one-to-one correspondence with the idempotent endomorphisms of the algebra $A$. If $\sigma\colon A\to A$ is an idempotent endomorphism of an algebra $A$, the corresponding internal semidirect-product decomposition of $A$ is $A=\sigma(A) \ltimes\!\!\sim_\sigma$, where $\sim_\sigma$ is the kernel of the endomorphism $\sigma$, that is, the congruence $\sim_\sigma$ on $A$ defined, for every $a,a'\in A$, by $a\sim_{\sigma}a'$ if $\sigma(a)=\sigma(a')$. For instance, in the case of sets (algebras with no operations), we have that, for a set $A$, idempotent endomappings of $A$ are in one-to-one correspondence with semidirect-product decompositions $A=B \ltimes\!\!\sim$ of $A$, that is, with the set of all pairs $(B,\sim)$, where $\sim$ is any equivalence relation on $A$, and $B$ is an irredundant set of representatives of $A$ modulo $\sim$. Given any such pair $(B,\sim)$, the corresponding idempotent endomapping $\sigma\colon A\to A$ maps any element $a$ of $A$ to the unique element of $B\cap[a]_\sim$. Conversely, conversely, given any idempotent endomapping $\sigma\colon A\to A$, the corresponding pair is $(\sigma(A),\sim_\sigma)$.

\medskip

{\em External semidirect products} (or, better, {\em semidirect-product extensions} of $B$) are constructed from, and parametrized by, all product-preserving functors $F\colon \Cal C_B\to \Set_*$ from a suitable category $\Cal C_B$ containing $B$ (called the {\em enveloping category} of $B$ or the {\em term category }of $B$) to the category $\Set_*$ of pointed sets \cite{semi}. The objects of $\Cal C_B$ are the $n$-tuples $(b_1,\dots,b_n)$ of elements of $B$, with $n\ge 0$. (That is, the objects of $\Cal C_B$ are all words in the alphabet $B$.) A morphism $(b_1,\dots,b_n)\to (c_1,\dots,c_m)$ in $\Cal C_B$ is any $m$-tuple $(p_1,\dots,p_m)$ of $n$-ary terms \cite[Definitions~10.1 and~10.2]{BS} such that $p_j(b_1,\dots,b_n)=c_j$ for every $j=1,2,\dots,m$. Composition of morphisms is defined by 
$$(q_1,\dots,q_r)\circ(p_1,\dots,p_m)=(q_1(p_1,\dots,p_m), q_2(p_1,\dots,p_m),\dots, q_r(p_1,\dots,p_m)).$$ See \cite[Section~4]{semi}. Two objects $(a_1,\dots,a_n),(b_1,\dots,b_m)$ are isomorphic in $\Cal C_B$ if and only if there is a bijection $\varphi\colon \{1,2,\dots,n\}\to\{1,2,\dots,m\}$ such that $a_{\varphi(j)}=b_j$ for every $j=1,\dots,n$.

In the category $\Cal C_B$ the product of two objects $(a_1,\dots,a_n),(b_1,\dots,b_m)$ of $\Cal C_B$ is the $(n+m)$-tuple $(a_1,\dots,a_n,b_1,\dots,b_m)$ (justapposition of words), with projections $$\pi_a\colon (a_1,\dots,a_n,b_1,\dots,b_m)\to (a_1,\dots,a_n)\quad{\mbox{\rm and}}\quad\pi_b\colon (a_1,\dots,a_n,b_1,\dots,b_m)\to (b_1,\dots,b_m),$$ the tuples of terms $\pi_a=(x_1,\dots,x_n)$ and $\pi_b=(x_{n+1},\dots,x_{n+m})$. In order to check that the Universal Property of Product holds, fix any object $(c_1,\dots,c_r)$ of $\Cal C_B$ and any pair of morphisms $f_a\colon (c_1,\dots,c_r)\to (a_1,\dots,a_n)$ and $f_b\colon (c_1,\dots,c_r)\to(b_1,\dots,b_m)$, where $f_a=(p_1,\dots,p_n)$ and $f_b=(q_1,\dots,q_m)$ are tuples of $r$-ary terms such that $p_j(c_1,\dots,c_r)=a_j$ for every $j=1,\dots,n$ and $q_k(c_1,\dots,c_r)=b_k$ for every $k=1,\dots,m$. It is easily seen that the unique morphism $f\colon (c_1,\dots,c_r)\to  (a_1,\dots,a_n,b_1,\dots,b_m)$ such that $\pi_af=f_a$ and $\pi_bf=f_b$ is the $(n+m)$-tuple of terms $f=(p_1,\dots,p_n,q_1,\dots,q_m)$.

Also, $\Cal C_B$ has a terminal object (the $0$-tuple, i.e., the word of length $0$). Thus the category $\Cal C_B$ has finite products. 

\begin{example} {\em (The variety of sets)} \ \ In the case of a set (an algebra with no operations), we have that, for a set $A$, the objects of the category $\Cal C_A$ are all $n$-tuples $(a_1,\dots,a_n)$ of elements of $A$, for any integer $n\ge0$. The terms in the variables $x_1,\dots,x_n$ are only $x_1,\dots,x_n$, so that a morphism $(a_1,\dots,a_n)\to (b_1,\dots,b_m)$ in $\Cal C_A$ is any $m$-tuple $(x_{\varphi(1)},\dots,x_{\varphi(m)})$ with $\varphi\colon \{1,2,\dots,m\}\to\{1,2,\dots,n\}$ a mapping such that $(a_{\varphi(1)},\dots, a_{\varphi(m)})=(b_1,\dots,b_m)$. The composite morphism of the morphisms $$(x_{\varphi(1)},\dots,x_{\varphi(m)})\colon (a_1,\dots,a_n)\to (b_1,\dots,b_m)$$ and $$(x_{\psi(1)},\dots,x_{\psi(r)})\colon (b_1,\dots,b_m)\to (c_1,\dots,c_r)$$  is $$(x_{\psi(1)},\dots,x_{\psi(r)})\circ(x_{\varphi(1)},\dots,x_{\varphi(m)})=(x_{\varphi\psi(1)},\dots,x_{\varphi\psi(r)}).$$ Here $\varphi\colon \{1,2,\dots,m\}\to\{1,2,\dots,n\}$ and $\psi\colon \{1,2,\dots,r\}\to\{1,2,\dots,m\}$ are mappings such that  $(a_{\varphi(1)},\dots, a_{\varphi(m)})=(b_1,\dots,b_m)$ and $(b_{\psi(1)},\dots, b_{\psi(r)})=(c_1,\dots,c_r)$.

In the case of $1$-tuples, that is, of elements of $A$, we have that in the category $\Cal C_A$ there is no morphism $a\to a'$ for $a\ne a'$ ($a,a'\in A$), and there is only the identity morphism $a\to a'$ for $a=a'$. Therefore, in order to describe a product-preserving functor $F\colon \Cal C_A\to\Set_*$ it suffices to associate to any $a\in A$ a pointed set $(B_a,b_a)$. Then to any $n$-tuple $(a_1,\dots,a_n)$ there corresponds via $F$ the pointed set $(B_{a_1}\times\dots\times B_{a_n},(b_{a_1},\dots,b_{a_n}))$, and to any morphism $$(x_{\varphi(1)},\dots,x_{\varphi(m)})\colon (a_1,\dots,a_n)\to (a'_1,\dots,a'_m)$$ there corresponds the mapping $(\pi_{\varphi(1)}\times\dots\times\pi_{\varphi(m)})\colon B_{a_1}\times\dots\times B_{a_n}\to B_{a'_1}\times\dots\times B_{a'_m}$. The semidirect-product extension corresponding to this functor $F\colon \Cal C_A\to\Set_*$ is the disjoint union $\dot{\bigcup}_{a\in A}B_a$. Thus product-preserving functors $F\colon \Cal C_A\to\Set_*$ parametrize all semidirect-product extensions of the set $A$. Notice that the idempotent endomapping corresponding to $\dot{\bigcup}_{a\in A}B_a$ is the mapping $e\colon \dot{\bigcup}_{a\in A}B_a\to \dot{\bigcup}_{a\in A}B_a$ that maps all the elements of $B_a$ to $b_a$. Hence $\dot{\bigcup}_{a\in A}B_a=A\ltimes\omega$, where $\omega$ is the equivalence relation on $\dot{\bigcup}_{a\in A}B_a$ with equivalence classes the sets $B_a$.
\end{example}

\subsection{First properties of left skew trusses}\label{First}

\begin{lemma}\label{lemma} Let $(T,+)$ be an additive group and $\sigma\colon T\to T$ be a mapping. Then the binary operation $\sigma\pi_1$ is:

\noindent{\rm (a)} Right distributive if and only if $\sigma$ is a group morphism.

\noindent{\rm (b)}  Always left skew $\sigma$-distributive.

\noindent{\rm (c)} Left distributive if and only if $\sigma(a)=0_T$ for every $a\in T$. \end{lemma}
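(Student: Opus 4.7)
The plan is to unwind the definition $a(\sigma\pi_1)b=\sigma(a)$ in each of the three distributivity identities and read off the corresponding condition on $\sigma$. All three parts reduce to one-line computations, so the proof is essentially bookkeeping; the only thing to be careful about is not to mis-apply the definition when $\sigma(a)$ appears as the \emph{left} argument of $\sigma\pi_1$ in part~(c).

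For (a), I would write out right distributivity of $\sigma\pi_1$ over $+$: for every $a,b,c\in T$,
\[
(a+b)(\sigma\pi_1)c=a(\sigma\pi_1)c+b(\sigma\pi_1)c,
\]
which by the definition of $\sigma\pi_1$ becomes $\sigma(a+b)=\sigma(a)+\sigma(b)$. Since this must hold for all $a,b,c$ (the variable $c$ is irrelevant), right distributivity is literally the statement that $\sigma$ is a group homomorphism.

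For (b), I would substitute $\circ=\sigma\pi_1$ into the left skew $\sigma$-distributive law: the left-hand side $a\circ(b+c)$ equals $\sigma(a)$, while the right-hand side $(a\circ b)-\sigma(a)+(a\circ c)$ equals $\sigma(a)-\sigma(a)+\sigma(a)=\sigma(a)$. So the identity holds unconditionally, with no hypothesis on $\sigma$ or on the group $(T,+)$.

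For (c), left distributivity of $\sigma\pi_1$ reads
\[
a(\sigma\pi_1)(b+c)=a(\sigma\pi_1)b+a(\sigma\pi_1)c,
\]
i.e., $\sigma(a)=\sigma(a)+\sigma(a)$, which by cancellation in the group $(T,+)$ is equivalent to $\sigma(a)=0_T$ for every $a\in T$. Here the (mild) obstacle worth flagging is to keep track that the $a$ on both sides is the same element (so the right-hand side really is $\sigma(a)+\sigma(a)$, not $\sigma(a)+\sigma(b)$ or similar), after which the conclusion is immediate.
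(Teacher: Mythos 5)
Your proof is correct, and since the paper simply declares this lemma elementary and omits the argument, your direct unwinding of the definition $a(\sigma\pi_1)b=\sigma(a)$ in each distributivity identity is exactly the intended computation. All three parts check out; the cautionary remark before part (c) is harmless but unnecessary, as $\sigma(a)$ never occurs as a left argument of $\sigma\pi_1$ there.
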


The proof is elementary.

\smallskip

In view of Lemmas \ref{1} and \ref{lemma}(b), if $(T,+)$ is any group and $\sigma\colon T\to T$ is any mapping, then $(T,+,\sigma\pi_1,\sigma)$ is a left skew truss if and only if the mapping $\sigma\colon T\to T$ is idempotent. More generally:

		\begin{theorem}\label{2.3} If $(T, +,-,0_T,\circ,\sigma)$ is any left skew truss, then:
		
		\noindent{\rm (a)} For every $a\in T$, the mapping $\lambda^T_a\colon T\to T$, defined by
$\lambda^T_a(b)=-\sigma(a)+(a\circ b)$ for every $b\in T$, is a group endomorphism of the group $(T,+)$.
		
		\noindent{\rm (b)} $a\circ 0_T=\sigma(a)$ for every $a\in T$.

		\noindent{\rm (c)} $\sigma\colon T\to T$ is an idempotent mapping.
		
		\noindent{\rm (d)}  If $\sigma(0_T)=0_T$, then the group endomorphism $\lambda^T_{0_T}$ of $(T,+)$ is idempotent, $0_T\circ a=\lambda^T_0(a)$ for every $a\in T$, and the idempotent mappings $\sigma$ and $\lambda_{0_T}^T$ commute. 		\end{theorem}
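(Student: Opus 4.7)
The plan is to dispatch the four items in order, each time drawing only on the defining axioms (associativity of $\circ$ and the left skew $\sigma$-distributive law) together with whatever has already been established. For (a) I unfold the definition of $\lambda^T_a$ and apply the distributive law directly to $a\circ(b+c)$:
\[
\lambda^T_a(b+c)=-\sigma(a)+a\circ(b+c)=-\sigma(a)+(a\circ b)-\sigma(a)+(a\circ c)=\lambda^T_a(b)+\lambda^T_a(c),
\]
so $\lambda^T_a$ preserves addition and is therefore a group endomorphism of $(T,+)$. Item (b) is then immediate: since $\lambda^T_a$ is a group homomorphism it fixes $0_T$, so $-\sigma(a)+a\circ 0_T=0_T$, that is $a\circ 0_T=\sigma(a)$.

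For (c) my first move is to combine (b) with associativity to record the auxiliary identity $\sigma(a\circ b)=a\circ\sigma(b)$, obtained from
\[
\sigma(a\circ b)=(a\circ b)\circ 0_T=a\circ(b\circ 0_T)=a\circ\sigma(b).
\]
Specialising to $b=0_T$ gives $\sigma^2(a)=a\circ\sigma(0_T)$, while (b) supplies $\sigma(a)=a\circ 0_T$, so (c) reduces to proving $a\circ\sigma(0_T)=a\circ 0_T$ for every $a\in T$. This is the step I expect to carry the main work. My plan is to derive from associativity the semigroup-morphism identity $\lambda^T_{a\circ b}=\lambda^T_a\circ\lambda^T_b$ (a short calculation using (a), (b) and the auxiliary identity above) and then to specialise and iterate it at $a=b=0_T$ in order to pin down $\sigma(0_T)$ enough to close the argument.

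Item (d) follows quickly under the hypothesis $\sigma(0_T)=0_T$. Direct rewriting gives $\lambda^T_{0_T}(a)=-\sigma(0_T)+0_T\circ a=0_T\circ a$. Idempotency of $\lambda^T_{0_T}$ then drops out of the semigroup morphism $\lambda^T_{a\circ b}=\lambda^T_a\circ\lambda^T_b$ evaluated at $a=b=0_T$: the left-hand side is $\lambda^T_{0_T\circ 0_T}=\lambda^T_{\sigma(0_T)}=\lambda^T_{0_T}$, while the right-hand side is $(\lambda^T_{0_T})^2$. Commutativity of the two idempotents is then a single line from the auxiliary identity $\sigma(a\circ b)=a\circ\sigma(b)$ at $a=0_T$, which reads $\sigma(\lambda^T_{0_T}(b))=\lambda^T_{0_T}(\sigma(b))$.
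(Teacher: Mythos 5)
Your arguments for (a) and (b) are correct and coincide with the paper's: additivity of $\lambda^T_a$ is a literal restatement of the left skew $\sigma$-distributive law, and (b) comes from $\lambda^T_a(0_T)=0_T$. Part (d) is also fine. The two identities you invoke, $\sigma(a\circ b)=a\circ\sigma(b)$ and $\lambda^T_{a\circ b}=\lambda^T_a\circ\lambda^T_b$, are both valid (the first from (b) and associativity as you compute; the second from (a), (b), associativity and the first), and specializing them at $0_T$ under the hypothesis $\sigma(0_T)=0_T$ gives the idempotency of $\lambda^T_{0_T}$ and its commutation with $\sigma$ exactly as you say. The paper reaches the same conclusions slightly more directly, from $0_T\circ(0_T\circ a)=(0_T\circ 0_T)\circ a$ and $0_T\circ(a\circ 0_T)=(0_T\circ a)\circ 0_T$, without passing through the semigroup-morphism identity.

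The genuine gap is in (c), and you have located it precisely but not closed it: your reduction shows $\sigma^2(a)=a\circ\sigma(0_T)$, so (c) is equivalent to $a\circ\sigma(0_T)=a\circ 0_T$ for all $a$, and you defer this to an unexecuted plan. That plan cannot succeed, because the needed identity --- and hence statement (c) itself, for an arbitrary unary operation $\sigma$ --- fails. Take $T=\mathbb{Z}$ with the usual addition, $a\circ b=a+b+1$ and $\sigma(a)=a+1$. Then $\circ$ is associative and $a\circ(b+c)=a+b+c+1=(a\circ b)-\sigma(a)+(a\circ c)$, so all the axioms of a left skew truss hold, yet $\sigma^2(a)=a+2\neq a+1=\sigma(a)$. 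Iterating the semigroup-morphism identity is powerless here: in this example $\lambda^T_a=\id_T$ for every $a$, so $\lambda^T_{\sigma(0_T)}=(\lambda^T_{0_T})^2$ carries no information about $\sigma(0_T)$. (The paper's own one-line justification, that (c) ``follows from (b) and the associativity of $\circ$,'' yields only the same identity $\sigma^2(a)=a\circ\sigma(0_T)$ and is equally incomplete; (c) does become true under the extra hypothesis $\sigma(0_T)=0_T$ of part (d), since then $\sigma^2(a)=a\circ\sigma(0_T)=a\circ 0_T=\sigma(a)$.) So your proof of (c) is not only unfinished but, in the stated generality, unfixable; the honest conclusion is that (c) needs an additional hypothesis such as $\sigma(0_T)=0_T$.
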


		\begin{proof} (a)  is equivalent to $\lambda^T_a(b+c)=\lambda^T_a(b)+\lambda^T_a(c)$ for every $a,b,c\in T$, and this is trivially equivalent to  Identity (3.1). 
		
		(b) follows from (a), because $\lambda_a(0_T)=0_T$, so that $0_T=-\sigma(a)+(a\circ 0_T)$.
		
		(c) follows from (b) and the associativity of $\circ$.

		As far as (d) is concerned, suppose that $\sigma(0_T)=0_T$. Then $\lambda^T_{0_T}(a)=-\sigma(0_T)+(0_T\circ a)=0_T\circ a$ for every $a\in T$. Then $\lambda^T_{0_T}(\lambda^T_{0_T}(a))=0_T\circ 0_T\circ a=\sigma(0_T)\circ a=0_T\circ a=\lambda^T_{0_T}(a)$ for every $a\in T$. Finally, $\sigma$ and $\lambda_{0_T}^T$ commute because $0_T\circ(a\circ 0_T)=(0_T\circ a)\circ0_T$.
		\end{proof}

		As always in Universal Algebra, left skew truss homomorphisms are defined to be the mappings that preserve all operations $+,-,0_T,\circ,\sigma$, but in view of Theorem \ref{2.3}, it suffices that they preserve addition $+$ and multiplication $\circ$. Left skew trusses form a variety in
the sense of Universal Algebra. It is a variety of $\Omega$-groups in the sense of Higgins, it is a semiabelian category. 

\medskip

For every left skew truss $(T,+,\circ,\sigma)$ with $\sigma(0_T)=0_T$,  the group endomorphism $\lambda^T_{0_T}$ of $(T,+)$ is idempotent (Theorem~\ref{2.3}(d)), so that the additive group $(T,+)$ has a semidirect-sum decomposition $T = T_0 \rtimes T_c$, where $T_0:=\ker(\lambda_{0_T}^T)=\{\,a\in T \mid 0_T\circ a = 0_T\,\}$ (the  {\em $0$-symmetric part} of $T$) is a normal subgroup of  $(T,+)$, and $T_c:=\lambda^T_{0_T}(T) = \{\,a\in T \mid 0_T\circ a = a\,\}$ (the {\em constant part} of $T$) is a subgroup of the additive group $T$. By making use of Theorem~\ref{2.3}, it is easy to check that both $T_0$ and $T_c$ are left skew subtrusses of $(T,+,\circ,\sigma)$, that is, they are also closed for $\circ$ and $\sigma$. 

\medskip

		An {\em ideal} of the left skew truss $(T, +,\circ,\sigma)$ is a normal subgroup $I$ of the additive group $(T, +)$ such that $(i+a)\circ b-a\circ b\in I$ and $\lambda_a(i)\in I$ for every $a,b\in T$ and  $i\in I$. (Notice that, for a normal subgroup $I$ of the additive group $T$, $(a+i)\circ b-a\circ b\in I$ for every $i\in I$ if and only if $(i+a)\circ b-a\circ b\in I$ for every $i\in I$.)
		
		\begin{proposition} Let $(T, +,\circ,\sigma)$ be a left skew truss. There is a one-to-one correspondence between the set of all ideals of $T$ and the set of all congruences of the left skew truss $T$.\end{proposition}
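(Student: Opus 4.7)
The plan is to exhibit the standard $\Omega$-group bijection explicitly: given an ideal $I$ of $T$, define the relation $\sim_I$ on $T$ by $a\sim_I b$ iff $-a+b\in I$; conversely, given a congruence $\omega$, set $I_\omega:=[0_T]_\omega$. Since $I$ is a normal subgroup and $\omega$ is compatible with $+$ and $-$, the relation $\sim_I$ is automatically an equivalence relation compatible with $+$ and $-$, and $I_\omega$ is automatically a normal subgroup. The content of the proof lies in checking that $\sim_I$ is also compatible with $\circ$ and $\sigma$, that $I_\omega$ satisfies the two defining axioms of an ideal, and that the two assignments are mutually inverse.

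The step I expect to be the main obstacle is $\circ$-compatibility of $\sim_I$. Given $a\sim_I a'$ and $b\sim_I b'$, I would write $a'=a+i$ and $b'=b+j$ with $i,j\in I$, and apply the left skew $\sigma$-distributive law to rewrite
$$a'\circ b'=a'\circ(b+j)=(a'\circ b)-\sigma(a')+a'\circ j=(a'\circ b)+\lambda^T_{a'}(j).$$
Hence $-(a\circ b)+a'\circ b'=\bigl(-(a\circ b)+a'\circ b\bigr)+\lambda^T_{a'}(j)$. The first summand lies in $I$ by the first ideal axiom together with the parenthetical remark in the definition (which identifies $(a+i)\circ b-a\circ b\in I$ with $(i+a)\circ b-a\circ b\in I$ for normal $I$), while the second lies in $I$ by the second ideal axiom. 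Compatibility with $\sigma$ is then free from Theorem~\ref{2.3}(b): since $\sigma(a)=a\circ 0_T$ and $0_T\sim_I 0_T$, the already established $\circ$-compatibility forces $\sigma(a)\sim_I\sigma(a')$.

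For the converse direction, I would verify that $I_\omega$ is an ideal by propagating $i\,\omega\,0_T$ through the operations of $T$: compatibility of $\omega$ with $+$ gives $(i+a)\,\omega\, a$ and hence $(i+a)\circ b\,\omega\, a\circ b$, so $(i+a)\circ b-a\circ b\in I_\omega$; compatibility with $\circ$ gives $a\circ i\,\omega\,a\circ 0_T=\sigma(a)$, so $\lambda^T_a(i)=-\sigma(a)+a\circ i\in I_\omega$. Mutual inverseness is then immediate: $[0_T]_{\sim_I}=\{a\mid -0_T+a\in I\}=I$, and $a\sim_{I_\omega} b$ iff $-a+b\in I_\omega$ iff $-a+b\,\omega\, 0_T$ iff $a\,\omega\, b$ by $+$-compatibility of $\omega$. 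The only delicate point in the whole argument is the use of the left skew $\sigma$-distributive law to express the ``right perturbation'' $a'\circ(b+j)-a'\circ b$ as $\lambda^T_{a'}(j)$, which is exactly what the second ideal axiom is designed to control.
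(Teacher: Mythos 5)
Your proposal is correct and follows essentially the same route as the paper: both directions reduce to the standard normal-subgroup/congruence bijection for groups, and the key step in each is the same use of left skew $\sigma$-distributivity to split the perturbation of $a\circ b$ into the two ideal conditions $(i+a)\circ b-a\circ b\in I$ and $\lambda^T_a(j)\in I$. The only (cosmetic) differences are your choice of right cosets $a'=a+i$ versus the paper's $b=i+a$, reconciled by the parenthetical normality remark, and your slightly more direct derivation of $\lambda^T_a(i)\in I_\omega$ via $a\circ i\,\omega\,a\circ 0_T=\sigma(a)$ instead of the paper's detour through $a\circ(b+i)\equiv a\circ b$.
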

	
	\begin{proof} Clearly, it suffices to prove that, for a normal subgroup $I$ of the additive group of $T$, the equivalence $\equiv$ on $T$, defined, for every $a,b\in T$, by $a\equiv b$ if $a-b\in I$,  is compatible not only with the addition $+$, but also with the multiplication $\circ$ and the unary operation $\sigma$ if and only if  $(i+a)\circ b-a\circ b\in I$ and $\lambda_a(i)\in I$ for every $a,b\in T$ and $i\in I$.
		
Suppose both $\circ$ and $\sigma$ compatible with the equivalence $\equiv$. Then $(i+a)\circ b\equiv (0+a)\circ b=a\circ b$, so that $(i+a)\circ b-a\circ b\in I$. Moreover, for every $a,b\in T$ and every $i\in I$, we have that $a\circ (b+i)\equiv a\circ b$, so that $-(a\circ b)+a\circ (b+i)\in I$. Hence, by left skew $\sigma$-distributivity of $\circ$, we get that $-(a\circ b)+a\circ b-\sigma (a)+a\circ i\in I$, that is $\lambda_a(i)=-\sigma (a)+a\circ i\in I$.

Conversely, assume that $(i+a)\circ b-a\circ b\in I$ and $\lambda_a(i)\in I$ for every $i\in I$, $a,b\in T$. Suppose $a,b,c,d\in T$, $a\equiv b$ and $c\equiv d$. Then $b=i+a$ and $d=c+j$ for suitable $i,j\in I$. Therefore $b\circ d=(i+a)\circ (c+j)=(i+a)\circ c -\sigma(i+a)+(i+a)\circ j\equiv a\circ c+\lambda_{i+a}(j)\equiv a\circ c$, so that $\equiv$ is compatible with $\circ$. Finally, $\equiv$ is compatible with $\sigma$, that is, $a\equiv b$ implies $\sigma(a)\equiv \sigma(b)$, because $\sigma(a)=a\circ 0_T\equiv b\circ 0_T=\sigma(b)$.\end{proof}

\section{Subtraction of operations and left ditrusses}\label{4}

\subsection{Subtraction of operations and left ditrusses}
Extending the framework developed for left skew rings in \cite{Fac} to the setting of left skew trusses, we will now write the binary operation $\sigma\pi_1$ on an additive group $(T,+)$ as the difference of an associative operation and a left distributive operation. That is,  for a group  $(T,+)$, and a mapping $\sigma\colon T\to T$ we will look for the triples of operations $(\sigma,\circ,\cdot)$ on $T$ with $\sigma$ unary, $\circ$ associative, $\cdot$ left distributive, and $\sigma\pi_1=\circ-\cdot$. This is equivalent to $\sigma(a)=a\circ b-a\cdot b$ for every $a,b\in T$, or to $\sigma(a)+a\cdot b=a\circ b$ for every $a,b\in T$.

\begin{definition} A {\em left ditruss} is an algebra $(D,+,-,0,\sigma,\circ,\cdot)$, where  $+$, $\circ$ and $\cdot$ are three binary operations, $-$ and $\sigma$ are unary, and $0$ 
is nullary, satisfying the following conditions:

(1) $(D,+,-,0)$ is a group, not-necessarily abelian; and

(2) \begin{equation}\sigma(a)+a\cdot b=a\circ b\label{skew}\end{equation} for every $a,b\in D$.\end{definition}

As usual in Universal Algebra, left ditruss morphisms are the mappings compatible with all the operations $+,-,0,\sigma,\circ$ and $\cdot$ of ditrusses, but it is clear that a mapping is a left ditruss morphism if and only if it is compatible with $+$ and with any two of the three operations $\sigma,\circ$ and $\cdot$.

\begin{example}\label{cvhli} A particularly interesting full subcategory of the category of left ditrusses is the category $\Cal C$ of the left ditrusses $(D,+,-,0,\sigma,\circ,\cdot)$ for which $\cdot$ depends only on the second factor:

\begin{lemma}\label{bpij} The following conditions are equivalent for a left ditruss $(D,+,-,0,\sigma,\circ,\cdot)$:

{\rm (a)} The binary operation $\cdot$ depends only on the second factor;

{\rm (b)}  $a\cdot b=\tau(b)$ for some mapping $\tau\colon D\to D$;

{\rm (c)}  $\cdot=\tau\pi_2$ for some mapping $\tau\colon D\to D$;

{\rm (d)} The mapping $\lambda\colon D\to D^D$ defined by $\lambda\colon a\mapsto \lambda_a$, where $\lambda_a(b)=a\cdot b$ for every $a,b\in D$, is constant.\end{lemma}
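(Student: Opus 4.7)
The plan is essentially to observe that all four conditions are trivial reformulations of the same idea, differing only in the language used (informal, functional, operational, curried). No properties of the ditruss structure are needed; the lemma is a purely set-theoretic statement about binary operations on $D$.

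I would proceed in a short cycle of implications. First, to go from (a) to (b): ``$\cdot$ depends only on the second factor'' means, by definition, that whenever $b = b'$, we have $a\cdot b = a'\cdot b'$ for all $a, a'\in D$. In particular, fixing any $a_0\in D$ and defining $\tau\colon D\to D$ by $\tau(b) := a_0\cdot b$, we get $a\cdot b = a_0\cdot b = \tau(b)$ for all $a,b$. For (b)$\Rightarrow$(c), note that by the definition of the operation $\tau\pi_2\colon D\times D\to D$, namely $a(\tau\pi_2)b := \tau(b)$, the equality $a\cdot b = \tau(b)$ is literally the equality $\cdot = \tau\pi_2$. For (c)$\Rightarrow$(a), since $\tau\pi_2$ has output depending only on its second input, so does $\cdot$.

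Then I would handle (b)$\Leftrightarrow$(d) by unwinding the currying. If $a\cdot b = \tau(b)$ for all $a,b$, then $\lambda_a(b) = a\cdot b = \tau(b)$ for every $a,b$, so $\lambda_a = \tau$ for every $a\in D$, i.e., $\lambda\colon D\to D^D$ is the constant map with value $\tau$. Conversely, if $\lambda$ is constant with common value $\tau\in D^D$, then $a\cdot b = \lambda_a(b) = \tau(b)$ for all $a,b\in D$, giving (b).

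There is no real obstacle here; the only thing to be mindful of is that (a) and (b) are phrased slightly differently (``depends only on'' versus ``equals a function of''), and one should either include the short argument producing $\tau$ from a fixed choice of $a_0$, or simply remark that this is the content of the definition. Once this is pinned down, (a)$\Leftrightarrow$(b)$\Leftrightarrow$(c)$\Leftrightarrow$(d) is immediate.
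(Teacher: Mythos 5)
Your proof is correct and is exactly the elementary unwinding of definitions that the paper has in mind (it omits the proof, stating only that it is elementary). Nothing further is needed.
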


The proof of Lemma \ref{bpij} is elementary. Let $\Cal C$ be the category of all left ditrusses satisfying the equivalent conditions of Lemma \ref{bpij}.
We will show in Subsection~\ref{cxytk} that the category $\Cal C$ has an involutive automorphism $F\colon\Cal C\to\Cal C$ defined by $F(D,+,-,0,\sigma,\circ,\tau\pi_2)=(D,+,-,0,\tau,\circ',\sigma\pi_2)$. Here the operation $\circ$ is necessarily the operation $\sigma\pi_1+\tau\pi_2$, and $\circ'$ is defined to be $\tau\pi_1+\sigma\pi_2$. The functor $F$ is the identity on morphisms.\end{example}

\begin{theorem}\label{new} Let $(D,+,-,0,\sigma,\circ,\cdot)$ be a left ditruss. The operation $\cdot$ is left distributive, that is, $a\cdot(b+c)=a\cdot b+a\cdot c$ for all $a,b,c\in D$, if and only if the operation $\circ$ is {\em left skew $\sigma$-distributive}, i.e.~$a\circ(b+c)=a\circ b-\sigma(a)+a\circ c$ for all $a,b,c\in D$.\end{theorem}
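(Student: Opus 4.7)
The plan is to use the defining ditruss identity \eqref{skew} to convert $\circ$ into $\cdot$ and back. From $\sigma(a)+a\cdot b=a\circ b$ we get the equivalent form $a\cdot b=-\sigma(a)+a\circ b$ in the (possibly non-abelian) group $(D,+)$. The whole argument is then a matter of substituting this into each of the two distributive identities and simplifying, so the proof will consist of two short direct calculations, one for each implication.

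For the forward direction, I would assume $a\cdot(b+c)=a\cdot b+a\cdot c$ and substitute the defining identity on both sides, obtaining
\[
-\sigma(a)+a\circ(b+c)=\bigl(-\sigma(a)+a\circ b\bigr)+\bigl(-\sigma(a)+a\circ c\bigr).
\]
Adding $\sigma(a)$ on the left of both sides then yields exactly the left skew $\sigma$-distributivity of $\circ$. For the converse, I would start from $a\circ(b+c)=a\circ b-\sigma(a)+a\circ c$ and, using $a\cdot x=-\sigma(a)+a\circ x$, rewrite
\[
a\cdot(b+c)=-\sigma(a)+a\circ(b+c)=-\sigma(a)+a\circ b-\sigma(a)+a\circ c=a\cdot b+a\cdot c.
\]

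I do not anticipate any real obstacle: the argument requires no hypothesis beyond the axiom defining a left ditruss, and in particular no associativity of $\circ$, no idempotence of $\sigma$, and no assumption that $\sigma$ is a homomorphism. The only thing to be careful about is that $(D,+)$ is not assumed abelian, so additive terms must be manipulated in the correct order, with $-\sigma(a)$ cancelling $\sigma(a)$ only on the appropriate side. Since both equations have the same structural shape once \eqref{skew} is substituted, the two implications are mirror images of each other, and the whole proof should fit in a few lines.
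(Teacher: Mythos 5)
Your proof is correct and is essentially the same as the paper's: both substitute $a\cdot x=-\sigma(a)+a\circ x$ into the two identities and cancel $\sigma(a)$ on the left, with due care for the non-abelian addition. The paper simply compresses your two mirror-image calculations into a single chain of equivalences.
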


\begin{proof} One has $$a\cdot(b+c)=a\cdot b+a\cdot c$$ if and only if $$-\sigma(a)+a\circ(b+c)=-\sigma(a)+a\circ b-\sigma(a)+a\circ c,$$ that is, if and only if $$a\circ(b+c)=a\circ b-\sigma(a)+a\circ c.$$
\end{proof}

Let $(G,+)$ be a group, and $\circ$ and $\cdot$ be two binary operations on the set $G$ with $\cdot$ distributive over $+$. A necessary and sufficient condition for the existence of a mapping $\sigma\colon G\to G$ such that $\sigma\pi_1=\circ-\cdot$, i.e., such that  $(G,+,\sigma,\circ,\cdot)$ be a left ditruss, is that the difference $\circ-\cdot$ be a binary operation that does not depend on the second factor, i.e., such that $a\circ b-a\cdot b=a \circ c-a\cdot c$ for all $a,b,c\in G$. Equivalently, such that $a\circ b-a\cdot b=a \circ 0-a\cdot 0$ for all $a,b\in G$. Now left distributivity of $\cdot$ is equivalent to the fact that left multiplication $\lambda_a\colon G\to G$, $\lambda_a\colon b\mapsto a\cdot b$, is a group endomorphism of the group $(G,+)$ for every $a\in G$. In particular, we necessarily have that $a\cdot 0=0$ for every $a\in G$. Therefore, in the particular case of $\cdot$ left distributive, we have that  $(G,+,\sigma,\circ,\cdot)$ is a left ditruss for some unary operation $\sigma$ if and only if $a\circ b-a\cdot b=a \circ 0$ for every $a,b\in G$. Moreover, if this hold, then we must have necessarily that $\sigma(a)=a\circ 0$ for every $a\in G$. This proves that:

\begin{proposition} Let $(G,+)$ be a group,  and $\circ$ and $\cdot$ be two binary operations on $G$ with $\cdot$ left distributive. Then the following two conditions are equivalent:

\noindent{\rm (a)} $a\circ b-a\cdot b=a\circ c-a\cdot c$ for every $a,b,c\in G$, that is, the binary operation $\circ-\cdot$ depends only on the first factor.

\noindent{\rm (b)} $a\circ b-a\cdot b=a\circ 0$ for every $a,b,c\in G$.

Moreover, if the previous two equivalent conditions hold and the mapping $\sigma\colon G\to G$ is defined setting $\sigma(a)=a\circ 0$ for every $a\in G$, then $(G,+,\circ,\cdot,\sigma)$ is a left ditruss and $a\circ(b+c)=a\circ b-\sigma(a)+a\circ c$ for every $a,b,c\in G$.\end{proposition}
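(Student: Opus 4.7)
The plan is to dispatch the equivalence (a)$\Leftrightarrow$(b) by a short direct argument and then read off the ``moreover'' part essentially from definitions and from Theorem~\ref{new}, which has already been proved.

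For the implication (b)$\Rightarrow$(a) there is really nothing to do: if $a\circ b-a\cdot b=a\circ 0$ for all $a,b$, then the right-hand side depends only on $a$, so applying this formula with $b$ and with $c$ yields the same value $a\circ 0$, hence $a\circ b-a\cdot b=a\circ c-a\cdot c$. For (a)$\Rightarrow$(b) I would specialize the hypothesis by taking $c=0_G$, obtaining $a\circ b-a\cdot b=a\circ 0-a\cdot 0$. The only observation needed is that $a\cdot 0=0$ for every $a\in G$: this is forced by left distributivity of $\cdot$, because it implies that each left multiplication $\lambda_a\colon b\mapsto a\cdot b$ is a group endomorphism of $(G,+)$, and a group endomorphism sends the neutral element to itself. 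Substituting this back yields condition (b).

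For the final ``moreover'' part, once $\sigma\colon G\to G$ is defined by $\sigma(a):=a\circ 0$, condition (b) becomes $a\circ b-a\cdot b=\sigma(a)$, and rearranging (remembering that the group need not be abelian, so the subtracted term sits on the right) gives $a\circ b=\sigma(a)+a\cdot b$ for all $a,b\in G$. This is exactly the defining identity~\eqref{skew} of a left ditruss, so $(G,+,\sigma,\circ,\cdot)$ is a left ditruss. The left skew $\sigma$-distributivity of $\circ$ is then immediate from Theorem~\ref{new} applied to this ditruss, since $\cdot$ is left distributive by assumption.

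There is no real obstacle here; the only point requiring a modicum of care is the bookkeeping of signs in the non-abelian additive group $(G,+)$, to make sure that the passage from $a\circ b-a\cdot b=\sigma(a)$ to $a\circ b=\sigma(a)+a\cdot b$ (and not $a\cdot b+\sigma(a)$) is correct, which follows from the convention that $x-y$ means $x+(-y)$.
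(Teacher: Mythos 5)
Your proposal is correct and follows essentially the same route as the paper: specialize (a) at $c=0_G$, use the fact that left distributivity forces $a\cdot 0=0$ (each $\lambda_a$ being a group endomorphism), rearrange $a\circ b-a\cdot b=\sigma(a)$ into the defining ditruss identity $\sigma(a)+a\cdot b=a\circ b$, and invoke Theorem~\ref{new} for the left skew $\sigma$-distributivity. The sign bookkeeping in the non-abelian group is handled correctly.
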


\begin{theorem}\label{2.3'}  Let $(D,+,-,0,\sigma, \circ,\cdot)$ be a left ditruss and suppose that the equivalent conditions of {\rm Theorem \ref{new}} hold, that is, assume that the operation $\cdot$ is left distributive. Then:
		
		\noindent{\rm (a)} For every $a\in D$, the mapping $\lambda^D_a\colon D\to D$, defined by
$\lambda^D_a(b)=a\cdot b$ for every $b\in D$, is a group endomorphism of the group $(D,+)$.

		\noindent{\rm (b)} $a\cdot 0=0$, $a\cdot(-b)=-(a\cdot b)$, $a\circ 0=\sigma(a)$,  and $a\circ(-b)=\sigma(a)-a\circ b+\sigma(a)$ for every $a,b\in D$.
		
		\noindent{\rm (c)} Assume that the mapping $\sigma$ is an idempotent group endomorphism of $(D,+)$. Then the operation $\circ$ is associative, that is, $(a\circ b)\circ c=a\circ(b\circ c)$ for all $a,b,c\in D$, if and only if the operation $\cdot$ is {\em left weakly $\sigma$-associative}, i.e.~$(\sigma(a)+a\cdot b)\cdot c=a\cdot(b\cdot c)$ for all $a,b,c\in D$. 
		
		Moreover, if the previous two equivalent conditions in {\rm (c)} hold, then $\sigma(a\cdot b)=a\cdot \sigma(b)$ for every $a,b\in D$.
		\end{theorem}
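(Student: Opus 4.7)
For (a), I would simply note that the hypothesis $a\cdot(b+c)=a\cdot b+a\cdot c$ is by definition the statement that $\lambda^D_a\colon b\mapsto a\cdot b$ is additive, so each $\lambda^D_a$ is a group endomorphism of $(D,+)$. For (b), the first two identities follow because group endomorphisms carry $0$ to $0$ and additive inverses to additive inverses; the ditruss axiom (\ref{skew}) then gives $a\circ 0=\sigma(a)+a\cdot 0=\sigma(a)$, and the last identity reduces, after expanding $-(a\circ b)=-(\sigma(a)+a\cdot b)=-a\cdot b-\sigma(a)$ in the possibly non-abelian group $(D,+)$, to $\sigma(a)-a\circ b+\sigma(a)=\sigma(a)-a\cdot b=\sigma(a)+a\cdot(-b)=a\circ(-b)$.

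For (c), my plan is to expand both $a\circ(b\circ c)$ and $(a\circ b)\circ c$ using (\ref{skew}), part (a), and the assumption that $\sigma$ is an idempotent group endomorphism (so $\sigma(x+y)=\sigma(x)+\sigma(y)$ and $\sigma^2=\sigma$). The computation yields
\begin{align*}
a\circ(b\circ c)&=\sigma(a)+a\cdot\sigma(b)+a\cdot(b\cdot c),\\
(a\circ b)\circ c&=\sigma(a)+\sigma(a\cdot b)+(\sigma(a)+a\cdot b)\cdot c,
\end{align*}
so, cancelling the leading $\sigma(a)$, associativity of $\circ$ is equivalent to
$$a\cdot\sigma(b)+a\cdot(b\cdot c)=\sigma(a\cdot b)+(\sigma(a)+a\cdot b)\cdot c\qquad(\star)$$
for every $a,b,c\in D$.

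The forward direction together with the moreover identity drops out by specialising $c=0$ in $(\star)$: using $x\cdot 0=0$ from (b), $(\star)$ collapses to $a\cdot\sigma(b)=\sigma(a\cdot b)$; substituting this identity back into $(\star)$ reduces what remains to the left weak $\sigma$-associativity $(\sigma(a)+a\cdot b)\cdot c=a\cdot(b\cdot c)$. Conversely, weak $\sigma$-associativity rewrites the right-hand side of $(\star)$ as $\sigma(a\cdot b)+a\cdot(b\cdot c)$, so $(\star)$ is equivalent to $a\cdot\sigma(b)=\sigma(a\cdot b)$, and the reverse implication reduces to proving this commutation from weak $\sigma$-associativity alone.

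Establishing $a\cdot\sigma(b)=\sigma(a\cdot b)$ starting from weak $\sigma$-associativity is the main obstacle I foresee: no one-variable specialisation of weak $\sigma$-associativity produces it directly. My plan is to use the reformulation of weak $\sigma$-associativity as the statement that $\lambda^D\colon(D,\circ)\to\End_\Gp(D,+)$ is a semigroup morphism; combined with $a\circ 0=\sigma(a)$ and $\sigma^2=\sigma$, this yields $\lambda^D_{\sigma(x)}=\lambda^D_x\circ\lambda^D_0$ for every $x\in D$. Applying this at $x=a\cdot b$, combining it with weak $\sigma$-associativity with $b$ replaced by $\sigma(b)$ and with the specialisation $\sigma(a)\cdot c=a\cdot(0\cdot c)$ obtained from the case $b=0$, I expect to produce the equality $\sigma(a\cdot b)=a\cdot\sigma(b)$ and thus close the loop. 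Once the moreover identity is secured by either route, $(\star)$ is immediate and the equivalence in (c) follows.
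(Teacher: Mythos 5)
Your treatment of (a), (b), and the ``only if'' half of (c) is correct and follows essentially the paper's route: expand both sides of $(a\circ b)\circ c=a\circ(b\circ c)$ using (\ref{skew}), left distributivity and the hypothesis that $\sigma$ is an idempotent group endomorphism, arrive at your identity $(\star)$ (the paper's identity (\ref{vhil}) with the leading $\sigma(a)$ cancelled), specialise $c=0$ to extract $\sigma(a\cdot b)=a\cdot\sigma(b)$, and substitute back to get left weak $\sigma$-associativity. Your derivation of $a\circ(-b)=\sigma(a)-a\circ b+\sigma(a)$ directly from (\ref{skew}) and (a) is also fine; the paper instead obtains it by applying skew $\sigma$-distributivity to $a\circ(b+(-b))$.

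The genuine gap is exactly the one you flag, and your proposed repair cannot close it, because the implication you need is false. Take $D=\Z^2$, $\sigma(x,y)=(x,0)$, and $a\cdot b=\tau(b)$ with $\tau(x,y)=(x+y,0)$. Then $\sigma$ and $\tau$ are idempotent group endomorphisms, $\cdot$ is left distributive, and left weak $\sigma$-associativity holds, since $(\sigma(a)+a\cdot b)\cdot c=\tau(c)=\tau^2(c)=a\cdot(b\cdot c)$; yet $\circ=\sigma\pi_1+\tau\pi_2$ is not associative, because $\sigma\tau\neq\tau\sigma$ (consistently with Proposition~\ref{3.2}(b)), and indeed $\sigma(a\cdot b)=\sigma\tau(b)\neq\tau\sigma(b)=a\cdot\sigma(b)$ in general. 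Every identity in your toolkit ($\lambda^D_{\sigma(x)}=\lambda^D_x\lambda^D_0$, $\sigma(a)\cdot c=a\cdot(0\cdot c)$, weak $\sigma$-associativity with $b$ replaced by $\sigma(b)$) holds in this example, so no combination of them can yield $\sigma(a\cdot b)=a\cdot\sigma(b)$. What your computation actually establishes is that, given weak $\sigma$-associativity, associativity of $\circ$ is equivalent to the extra condition $\sigma(a\cdot b)=a\cdot\sigma(b)$ (i.e., that $\sigma$ commutes with every $\lambda^D_a$). Note that the paper's own proof is no better off at this point: it deduces $\sigma(a\cdot b)=a\cdot\sigma(b)$ from (\ref{vhil}) and then declares (\ref{vhil}) equivalent to weak $\sigma$-associativity, which justifies only the direction you already have; the converse direction requires the commutation as an additional hypothesis, exactly as in Proposition~\ref{3.2}(c) and Corollary~\ref{3.9}.
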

		
		\begin{proof} (a) is just a restatement of the hypothesis that the operation $\cdot$ is left distributive. 
		
		(b) follows from (a) and Identity (\ref{skew}). Moreover, $\sigma(a)=a\circ 0=a\circ(b+(-b))=a\circ b-\sigma(a)+a\circ(-b)$.
		
		(c) Assume that  $\sigma$  is an idempotent endomorphism of $(D,+)$. Then $(a\circ b)\circ c=a\circ(b\circ c)$  if and only if $\sigma(\sigma(a)+a\cdot b)+(\sigma(a)+a\cdot b)\cdot c=\sigma(a)+a\cdot(\sigma(b)+b\cdot c)$, that is, if and only if \begin{equation}\sigma(a)+\sigma(a\cdot b)+(\sigma(a)+a\cdot b)\cdot c=\sigma(a)+a\cdot \sigma(b)+a\cdot(b\cdot c).\label{vhil}\end{equation} For $c=0$, identity (\ref{vhil}) implies that $\sigma(a\cdot b)=a\cdot \sigma(b)$ for every $a,b\in D$.  Therefore identity (\ref{vhil})  is equivalent to $(\sigma(a)+a\cdot b)\cdot c=a\cdot(b\cdot c)$.	\end{proof}

		Notice that left weak $\sigma$-associativity, that is $(\sigma(a)+a\cdot b)\cdot c=a\cdot(b\cdot c)$, can be written equivalently as $(a\circ b)c=a(bc)$ (and in this form the name weak $\sigma$-associativity is justified), for all $a,b,c\in D$. Equivalently,  the two equivalent statements in Theorem \ref{2.3'}(c) say that the mapping $\lambda$ is a semigroup morphism of the semigroup $(D,\circ)$ into the semigroup $\End_\Gp(D,+)$ (with composition of endomorphisms).
		We include this fact as statement (a) in the following proposition.
		
		\begin{proposition}\label{2.3''}  Let $(D,+,-,0,\sigma, \circ,\cdot)$ be a left ditruss and suppose that $\sigma$ is an idempotent additive group endomorphism of $(D,+)$, $\circ$ is associative and $\cdot$ is left distributive. Then:
		
		\noindent{\rm (a)} The mapping $\lambda^D\colon (D,\circ)\to \End_\Gp(D,+)$, defined by $\lambda^D\colon a\mapsto \lambda^D_a$, is a semigroup morphism. That is, $(a\circ b)\cdot c=a\cdot(b\cdot c)$ for every $a,b,c\in D$.
		
		\noindent{\rm (b)} The group endomorphism $\lambda^D_0$ of the group $(D,+,-,0)$ is an idempotent group endomorphism.
		\end{proposition}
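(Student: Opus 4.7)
The plan is that both parts are direct bookkeeping from results already established in Theorem~\ref{2.3'} together with the defining ditruss identity $\sigma(a)+a\cdot b=a\circ b$. The one thing to check is that the hypotheses of Theorem~\ref{2.3'}(c) are met under the hypotheses of the proposition. Since $\sigma$ is an idempotent group endomorphism and $\cdot$ is left distributive (so the equivalent conditions of Theorem~\ref{new} hold), and $\circ$ is assumed associative, Theorem~\ref{2.3'}(c) applies and gives us left weak $\sigma$-associativity of $\cdot$, namely $(\sigma(a)+a\cdot b)\cdot c=a\cdot(b\cdot c)$ for all $a,b,c\in D$.

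For part (a), I would now rewrite the left-hand side of left weak $\sigma$-associativity using the ditruss identity (\ref{skew}): $\sigma(a)+a\cdot b=a\circ b$. This turns the identity into $(a\circ b)\cdot c=a\cdot(b\cdot c)$, i.e.~$\lambda^D_{a\circ b}(c)=\lambda^D_a(\lambda^D_b(c))$ for every $c\in D$. Hence $\lambda^D_{a\circ b}=\lambda^D_a\circ\lambda^D_b$, which is precisely the statement that $\lambda^D$ is a semigroup morphism from $(D,\circ)$ to $(\End_\Gp(D,+),\circ)$. Note that $\lambda^D$ does indeed take values in $\End_\Gp(D,+)$ by Theorem~\ref{2.3'}(a).

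For part (b), I would apply (a) with $a=b=0$ to get $\lambda^D_{0\circ 0}=\lambda^D_0\circ\lambda^D_0$, and then compute $0\circ 0$. By (\ref{skew}), $0\circ 0=\sigma(0)+0\cdot 0$. Since $\sigma$ is a group endomorphism we have $\sigma(0)=0$, and since $\lambda^D_0$ is a group endomorphism (Theorem~\ref{2.3'}(a)) we have $0\cdot 0=\lambda^D_0(0)=0$. Hence $0\circ 0=0$, so $\lambda^D_0\circ\lambda^D_0=\lambda^D_0$, i.e.~$\lambda^D_0$ is idempotent.

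There is no genuine obstacle here; the statement is essentially a repackaging of Theorem~\ref{2.3'}(c) in terms of $\lambda^D$, together with the trivial specialization of part (a) to $a=b=0$. The only point one has to be mindful of is invoking Theorem~\ref{2.3'}(c) correctly, i.e.~checking that the hypotheses (idempotent endomorphism $\sigma$, left distributive $\cdot$, associative $\circ$) line up with those of the proposition, which they do.
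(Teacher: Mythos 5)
Your proof is correct and follows essentially the same route as the paper: part (a) is exactly the paper's argument (well-definedness from Theorem~\ref{2.3'}(a), the morphism property from Theorem~\ref{2.3'}(c) rewritten via the ditruss identity), and part (b) differs only in that you specialize (a) to $a=b=0$ and check $0\circ 0=0$, whereas the paper uses $0\cdot a=0\circ a$ together with associativity of $\circ$ --- two trivially equivalent computations.
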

		
		\begin{proof} (a)  The position $a\mapsto \lambda^D_a$ defines a mapping of $D$ into $\End_\Gp(D,+)$ by Theorem~\ref{2.3'}(a). It is a semigroup morphism $(D,\circ)\to \End_\Gp(D,+)$ by Theorem~\ref{2.3'}(c).
		
		(b) We must show that $\lambda^D_0\circ\lambda^D_0=\lambda^D_0$, that is, that $0\cdot(0\cdot a)=0\cdot a$. But we have already remarked that $0\cdot a=0\circ a$ for all $a$, so that $0\cdot(0\cdot a)=0\cdot a$ is equivalent to $0\circ(0\circ a)=0\circ a$. Now $\circ$ is associative, hence $\lambda^D_0\circ\lambda^D_0=\lambda^D_0$.
		\end{proof}
		
		For the next two paragraphs suppose that the left ditruss $D$ satisfies the hypotheses of Proposition~\ref{2.3''}. Then the two idempotent group endomorphisms $\sigma$ and $\lambda^D_0$ are right multiplication by $0_D$ and left multiplication by $0_D$ respectively (both with respect to the multiplication $\circ$). These two idempotent endomorphisms of the group $(D,+)$ are not necessaily equal, as the following example show. Let $\sigma$ be a nontrivial idempotent endomorphism of a group $(D,+,-,0_D)$. Consider the left ditruss $(D,+,-,0,\sigma, \sigma\pi_1, \circ_{0})$, where $\circ_{0}$ is the operation defined by $a\circ_0b=0$ for every $a,b\in D$. Then $\lambda^D_0$ is the null endomorphism of $(D,+)$, which is different from $\sigma$.
		
		Nevertheless, we have already remarked that the two idempotent group endomorphisms $\sigma$ and $\lambda^D_0$ always commute, because $0_D\circ(a\circ 0_D)=(0_D\circ a)\circ 0_D$. 
		
		\subsection{Properties of left ditrusses with $\cdot=\tau\pi_2$}\label{cxytk} In Example~\ref{cvhli} we have considered the ditrusses in the category $\Cal C$ of all the left ditrusses $(D,+,-,0,\sigma,\circ,\tau\pi_2)$ for some mapping $\tau\colon D\to D$. There is an involutive category automorphism $F\colon\Cal C\to\Cal C$ defined by $$F(D,+,-,0,\sigma,\circ,\tau\pi_2)=(D,+,-,0,\tau,\circ',\sigma\pi_2).$$ Here the operations are $\circ=\sigma\pi_1+\tau\pi_2$, and $\circ'=\tau\pi_1+\sigma\pi_2$. Thus $(D,+,-,0,\sigma,\circ,\tau\pi_2)$ corresponds to the difference $\sigma\pi_1=(\sigma\pi_1+\tau\pi_2)-\tau\pi_2$ and
		$(D,+,-,0,\tau,\circ',\sigma\pi_2)$  corresponds to the difference $\tau\pi_1=(\tau\pi_1+\sigma\pi_2)-\sigma\pi_2$. Here $\sigma$ and $\tau$ are arbitrary mappings $T\to T$.
		
		\medskip
		
		In the following proposition we have collected some properties of the left ditrusses in the category $\Cal C$.
		
		\begin{proposition}\label{3.2} Let $(D,+,-,0,\sigma,\circ,\tau\pi_2)$ be a left ditruss where $\sigma$ and $\tau$ are arbitrary mappings $D\to D$. Then:
		
	\noindent{\rm (a)} The binary operation $\tau\pi_2$ is left distributive if and only if $\tau$ is an endomorphism of the group $(D,+)$.
	
		\noindent{\rm (b)} Assume $\sigma(0)=\tau(0)=0$. If the binary operation $\circ=\sigma\pi_1+\tau\pi_2$ is associative, then $\sigma$ and $\tau$ are idempotent endomappings of $D$ and $\sigma\tau=\tau\sigma$.
		
		\noindent{\rm (c)} Let $\sigma$ and $\tau$ be two idempotent endomorphisms of the group $(D,+)$ and suppose $\sigma\tau=\tau\sigma$. Then the binary operation $\circ=\sigma\pi_1+\tau\pi_2$ is associative.\end{proposition}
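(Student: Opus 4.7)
The plan is to verify each of the three statements (a), (b), (c) by expanding the associativity identity $(a\circ b)\circ c=a\circ(b\circ c)$ using the explicit form $a\circ b=\sigma(a)+\tau(b)$, and then specializing by substituting $0$ for appropriate variables. Part (a) is essentially a tautology and will take one line; parts (b) and (c) will be handled by judicious substitutions into one master identity, then by a direct additive expansion respectively.

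For (a), I would simply observe that the condition $a\cdot(b+c)=a\cdot b+a\cdot c$ becomes $\tau(b+c)=\tau(b)+\tau(c)$ once we substitute $a\cdot x=\tau(x)$, which is exactly the definition of $\tau$ being a group endomorphism of $(D,+)$. No further work is needed.

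For (b), I would write out the associativity identity as
\begin{equation*}
\sigma(\sigma(a)+\tau(b))+\tau(c)=\sigma(a)+\tau(\sigma(b)+\tau(c)),
\end{equation*}
and then plug in specific values. Setting $a=0=b$ and using $\sigma(0)=\tau(0)=0$ collapses the identity to $\tau(c)=\tau(\tau(c))$, giving idempotence of $\tau$. Setting $b=0=c$ symmetrically yields $\sigma(\sigma(a))=\sigma(a)$, i.e.\ idempotence of $\sigma$. Finally, setting $a=0=c$ gives $\sigma(\tau(b))=\tau(\sigma(b))$, which is the commutation relation $\sigma\tau=\tau\sigma$. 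The only mildly subtle point is choosing the three right substitutions; everything else is mechanical.

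For (c), I would start again from the master identity. Since $\sigma$ and $\tau$ are now assumed to be group endomorphisms, the left-hand side expands as $\sigma^2(a)+\sigma\tau(b)+\tau(c)$ and the right-hand side as $\sigma(a)+\tau\sigma(b)+\tau^2(c)$. Applying idempotence of $\sigma$ and $\tau$ and the commutation $\sigma\tau=\tau\sigma$ equates the two expressions, establishing associativity of $\circ$. I do not anticipate a genuine obstacle in any of the three parts; the only minor care needed is to keep the noncommutative addition in $(D,+)$ from being rearranged inadvertently in part (c), but since each term is already in the canonical left-to-right order after applying additivity, nothing needs to be permuted.
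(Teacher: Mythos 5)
Your proof is correct and follows essentially the same route as the paper: part (a) by direct inspection, and parts (b) and (c) by reducing associativity to the single identity $\sigma(\sigma(a)+\tau(b))+\tau(c)=\sigma(a)+\tau(\sigma(b)+\tau(c))$, then specializing with the substitutions $(a,0,0)$, $(0,b,0)$, $(0,0,c)$ for (b) and expanding by additivity for (c). The paper's proof uses exactly these substitutions and the same term-by-term expansion, so there is nothing to add.
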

		
		\begin{proof} (a)  is trivial, because $a(\tau\pi_2)(b+c)=a(\tau\pi_2)b+a(\tau\pi_2)c$ if and only if $\tau(b+c)=\tau(b)+\tau(c)$.
		
		As far as (b) and (c) are concerned, notice that $\circ=\sigma\pi_1+\tau\pi_2$ is associative if and only if $$a(\sigma\pi_1+\tau\pi_2)(b(\sigma\pi_1+\tau\pi_2)c)=a(\sigma\pi_1+\tau\pi_2)(\sigma(b)+\tau(c))=\sigma(a)+\tau((\sigma(b)+\tau(c))$$ is equal to 
		 $$(a(\sigma\pi_1+\tau\pi_2)b)(\sigma\pi_1+\tau\pi_2)c=(\sigma(a)+\tau(b))(\sigma\pi_1+\tau\pi_2)c=\sigma(\sigma(a)+\tau(b))+\tau(c).$$ Therefore $\circ$ is associative if and only if \begin{equation}\sigma(a)+\tau((\sigma(b)+\tau(c))=\sigma(\sigma(a)+\tau(b))+\tau(c).\label{vo}\end{equation}
		 
		 Now in order to prove (b) suppose $\sigma(0)=\tau(0)=0$ and $\circ$ associative, so that Identity (\ref{vo}) holds for every $a,b,c\in D$. Replacing $(a,b,c)$ in  Identity (\ref{vo}) with $(a,0,0),(0,b,0),(0,0,c)$ respectively, we get that $\sigma(a)=\sigma(\sigma(a))$, $\tau(\sigma(b))=\sigma(\tau(b))$ and $\tau(\tau(c))=\tau(c)$, respectively. Therefore $\sigma$ and $\tau$ are idempotent and commute. This proves (b).
		 
		 To prove (c), suppose that $\sigma$ and $\tau$ are two idempotent endomorphisms of $(D,+)$ that commute. In order to prove that the operation $\circ=\sigma\pi_1+\tau\pi_2$ is associative, we must prove that Identity (\ref{vo}) holds  for every $a,b,c\in D$. But $\sigma(a)+\tau(\sigma(b)+\tau(c))=\sigma(a)+\tau(\sigma(b))+\tau(\tau(c))=\sigma(\sigma(a))+\sigma(\tau(b))+\tau(c)=\sigma(\sigma(a)+\tau(b))+\tau(c)$, as desired.\end{proof}
		
		Let us go back to left ditrusses $(T,+,-,0,\sigma, \circ,\cdot)$. In Proposition~\ref{2.3''}(a)  we saw that, under suitable hypothesis, the mapping $\lambda^T\colon (T,\circ)\to \End_\Gp(T,+)$ is a semigroup morphism. 	Now, for any two semigroups $T$ and $E$, the most natural examples of semigroup morphisms $T\to E$ are the {\em constant morphisms}, that is, the morphisms $\varphi\colon T\to E$ such that $\varphi(t)=\varphi(t')$ for every $t,t'\in T$. Clearly, for $T$ non-empty, constant morphisms $T\to E$ are in one-to-one correspondence with the idempotent elements of $E$. 
		
		
		\begin{corollary}\label{3.9} The following conditions are equivalent for an algebra $(D,+,-,0,\circ,\sigma)$ with $(D,+,-,0)$ a group, $\circ$ a binary operation on $D$ and $\sigma$ an idempotent group endomorphism of $(D,+,-,0)\!:$
		
\noindent{\rm (a)} $(D,+,-,0,\circ,\sigma)$ is a left skew truss for which the semigroup endomorphism $\lambda^D\colon (D,\circ)\to\End_{\Gp}(D,+)$ is a constant mapping.

\noindent{\rm (b)} There exists an idempotent group endomorphism $\tau$ of $(D,+,-,0)$ that commutes with $\sigma$ and such that $\circ=\sigma\pi_1+\tau\pi_2$.

\noindent{\rm (c)} There exist an idempotent group endomorphism $\tau$ of $(D,+,-,0)$ that commutes with $\sigma$ and a binary operation $\circ'$ on $D$ such that $(D,+,-,0,\circ',\tau)$ is a left skew truss for which the corresponding semigroup endomorphism $\lambda^D\colon (D,\circ')\to\End_{\Gp}(D,+)$ is the mapping constantly equal to $\sigma$.

\noindent{\rm (d)} There exists an idempotent group endomorphism $\tau$ of $(D,+,-,0)$ that commutes with $\sigma$ and such that $(D,+,-,0,\tau\pi_1+\sigma\pi_2,\tau)$ is a left skew truss.

Moreover, if these equivalent conditions {\rm (a)}-{\rm (d)} hold, then the operation $\circ$ is not only left skew $\sigma$-distributive, but also right skew $\tau$-distributive, and the operation $\circ'$ is not only left skew $\tau$-distributive, but also right $\sigma$-distributive.
\end{corollary}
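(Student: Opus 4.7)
My plan is to establish the equivalences by the cycle (a)$\Rightarrow$(b)$\Rightarrow$(d)$\Rightarrow$(c)$\Rightarrow$(a), supplementing with the direct implication (b)$\Rightarrow$(a); each step reduces to a short application of Theorem~\ref{2.3} or Proposition~\ref{3.2}, and the Moreover is a one-line additive computation. The unifying thread I want to make explicit is that the same distinguished endomorphism $\tau=\lambda^D_0$ witnesses the existential in each of (b), (c), (d).

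For (a)$\Rightarrow$(b), since $\sigma$ is a group endomorphism we have $\sigma(0)=0$, so Theorem~\ref{2.3}(d) applies and $\tau:=\lambda^D_0$ is an idempotent group endomorphism of $(D,+)$ that commutes with $\sigma$. The assumed constancy of $\lambda^D$ forces $\lambda^D_a=\tau$ for every $a\in D$, i.e., $-\sigma(a)+a\circ b=\tau(b)$, whence $a\circ b=\sigma(a)+\tau(b)$ and $\circ=\sigma\pi_1+\tau\pi_2$. For (b)$\Rightarrow$(a), Proposition~\ref{3.2}(c) gives associativity of $\sigma\pi_1+\tau\pi_2$; left skew $\sigma$-distributivity follows from $\sigma(a)+\tau(b)+\tau(c)=(\sigma(a)+\tau(b))-\sigma(a)+(\sigma(a)+\tau(c))$; and $\lambda^D_a(b)=-\sigma(a)+\sigma(a)+\tau(b)=\tau(b)$ shows $\lambda^D$ is the constant map $\tau$.

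For (b)$\Rightarrow$(d) I reuse the same $\tau$ and invoke Proposition~\ref{3.2}(c) with the roles of $\sigma$ and $\tau$ swapped (permissible since both are idempotent endomorphisms that commute): the operation $\tau\pi_1+\sigma\pi_2$ is associative, and left skew $\tau$-distributivity is a symmetric one-line check, yielding the truss of (d). For (d)$\Rightarrow$(c), setting $\circ':=\tau\pi_1+\sigma\pi_2$ one computes $\lambda^D_a(b)=-\tau(a)+\tau(a)+\sigma(b)=\sigma(b)$, so the associated $\lambda^D$ is the constant map $\sigma$. For (c)$\Rightarrow$(a), the condition $\lambda^D\equiv\sigma$ in (c) forces $\circ'=\tau\pi_1+\sigma\pi_2$; the involutive automorphism $F$ of Subsection~\ref{cxytk}, which exchanges $\sigma\leftrightarrow\tau$ and sends $\tau\pi_1+\sigma\pi_2\mapsto\sigma\pi_1+\tau\pi_2$, then transports the truss structure back to $(D,+,\sigma\pi_1+\tau\pi_2,\sigma)=(D,+,\circ,\sigma)$, which is precisely (a).

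For the Moreover, right skew $\tau$-distributivity of $\circ=\sigma\pi_1+\tau\pi_2$ asks for $(a+b)\circ c=a\circ c-\tau(c)+b\circ c$; both sides reduce to $\sigma(a)+\sigma(b)+\tau(c)$ by additivity of $\sigma$, so the identity holds. Right skew $\sigma$-distributivity of $\circ'=\tau\pi_1+\sigma\pi_2$ is symmetric. The main obstacle I anticipate is not any deep technical point but careful bookkeeping: the four clauses existentially bind their own $\tau$ (and, in (c), also $\circ'$), and the essential content of the corollary is that a single $\tau$, namely $\lambda^D_0$ of the truss structure on $\circ$, simultaneously witnesses all four existentials, with the involution $F$ providing the bridge between the $\sigma$-side and the $\tau$-side of the structure.
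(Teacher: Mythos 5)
Your proof is correct and follows essentially the same route as the paper: the same key ingredients (Theorem~\ref{2.3}, Proposition~\ref{3.2}(b)--(c), and the computation $\lambda_a(b)=-\tau(a)+\tau(a)+\sigma(b)=\sigma(b)$) appear, merely arranged in the cycle (a)$\Rightarrow$(b)$\Rightarrow$(d)$\Rightarrow$(c)$\Rightarrow$(a) instead of the paper's (a)$\Rightarrow$(b)$\Rightarrow$(c)$\Rightarrow$(d)$\Rightarrow$(a), and your appeal to the involution $F$ in the last leg is harmless only because your explicit (b)$\Rightarrow$(a) verification already supplies the associativity and distributivity that $F$ alone would not guarantee. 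Your additive check of the ``Moreover'' clause is a welcome addition, since the paper asserts it without displaying the computation.
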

		
		\begin{proof} (a)${}\Rightarrow{}$(b) Suppose that (a) holds, so that $(D,+,-,0,\circ,\sigma)$ is a left skew truss and there exists a group endomorphisms $\tau$ of $(D,+)$ such that $\lambda^D_a=\tau$ for every $a\in D$. Since $\lambda^D\colon (D,\circ)\to\End_{\Gp}(D,+)$ is a semigroup endomorphism, we have that $\tau$ is necessarily idempotent. As $(D,+,-,0,\circ,\sigma)$ is a left skew truss, the operation $\circ$ is associative and $\sigma\pi_1=\circ-\cdot$, so that $a\circ b=a(\sigma\pi_1)b+a\cdot b=\sigma(a)+\lambda^D_a(b)=\sigma(a)+\tau(b)$. Therefore $\circ=\sigma\pi_1+\tau\pi_2$. By Proposition~\ref{3.2}(b), the idempotent endomorphisms $\sigma$ and $\tau$ commute.
		
		(b)${}\Rightarrow{}$(c) Let $\tau$ be an idempotent group endomorphism of $(D,+,-,0)$ that commutes with $\sigma$ and such that $\circ=\sigma\pi_1+\tau\pi_2$. We will prove that the binary operation $\circ'=\tau\pi_1+\sigma\pi_2$ on $D$ is such that $(D,+,-,0,\circ',\tau)$ is a left skew truss for which the corresponding semigroup endomorphism $\lambda^D\colon (D,\circ')\to\End_{\Gp}(D,+)$ is the mapping constantly equal to $\sigma$. From Example~\ref{cvhli} we knwo that $(D,+,-,0,\tau,\circ',\sigma\pi_2)$ is a left ditruss. By  Proposition~\ref{3.2}(c), the operation $\circ'$ is associative. Also $a\circ'(b+c)=\tau(a)+\sigma(b+c)=\tau(a)+\sigma(b)+\sigma(c)=\tau(a)+\sigma(b)-\tau(a)+\tau(a)+\sigma(c)=a\circ'b-\tau(a)+a\circ'c$, so that $(D,+,-,0,\circ',\tau)$ is a left skew truss. Finally, for the corresponding semigroup morphism $\lambda$, we have that $\lambda_a(b)=-\tau(a)+a\circ'b=-\tau(a)+\tau(a)+\sigma(b)=\sigma(b)$.

			(c)${}\Rightarrow{}$(d) 	Assume that (c) holds. In order to prove (d) it suffices to show that $\circ'=\tau\pi_1+\sigma\pi_2$. But since $(D,+,-,0,\circ',\tau)$ is a left skew truss, we know that $a\circ'b=\tau(a)+a\cdot b=\tau(a)+\lambda_a(b)=\tau(a)+\sigma(b)=a(\tau\pi_1+\sigma\pi_2)b$, as desired.
			
			(d)${}\Rightarrow{}$(a) Apply the implications (a)${}\Rightarrow{}$(b)${}\Rightarrow{}$(c) to the left skew truss $(D,+,-,0,\tau\pi_1+\sigma\pi_2,\tau)$, with the roles of the two commuting idempotent group morphisms $\sigma$ and $\tau$ interchanged.
		\end{proof}

		\section{Interchange near rings, weak right trusses}\label{Inter}
		
		Recall that for any binary operation $\circ$ on a set $D$, it is possible to define its opposite $\circ^{\op}$ setting $a\circ^{\op}b=b\circ a$ for all $a,b\in D$.
		
		\begin{proposition} Let $(D,+,-,0,\sigma,\circ,\tau\pi_2)$ be a left ditruss with $\sigma$ and $\tau$ arbitrary mappings $D\to D$, so that $\circ=\sigma\pi_1+\tau\pi_2$. Assume $\circ'=\tau\pi_1+\sigma\pi_2$.  Then $\circ'=\circ^{\op}$ if and only if $\sigma$ and $\tau$ are {\em image-commuting}, that is, $\sigma(x)+\tau(y)=\tau(y)+\sigma(x)$ for every $x,y\in D$.\end{proposition}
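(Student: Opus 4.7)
The plan is to directly unpack both sides of the equation $\circ' = \circ^{\op}$ using the explicit formulas for the two operations, and then read off the image-commuting condition from what remains.

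More concretely, I would argue as follows. By definition of $\circ^{\op}$, the equality $\circ' = \circ^{\op}$ means that $a \circ' b = b \circ a$ for every $a, b \in D$. Using the hypothesis $\circ = \sigma\pi_1 + \tau\pi_2$, the right-hand side $b \circ a$ equals $\sigma(b) + \tau(a)$. Using the assumption $\circ' = \tau\pi_1 + \sigma\pi_2$, the left-hand side $a \circ' b$ equals $\tau(a) + \sigma(b)$. Hence $\circ' = \circ^{\op}$ is equivalent to
\[
\tau(a) + \sigma(b) = \sigma(b) + \tau(a) \qquad \text{for every } a, b \in D,
\]
which, after relabelling $y := a$ and $x := b$, is exactly the image-commuting condition $\sigma(x) + \tau(y) = \tau(y) + \sigma(x)$ for every $x, y \in D$.

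Since the argument is really just a substitution followed by recognizing a definition, there is no substantive obstacle: the only thing worth being careful about is that the equality must be required to hold for all $a, b$ (so that all pairs $(\sigma(b), \tau(a))$ are covered), which is why both directions of the equivalence are immediate.
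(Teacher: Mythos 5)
Your proof is correct and follows exactly the paper's own argument: unpack $a\circ' b=\tau(a)+\sigma(b)$ and $b\circ a=\sigma(b)+\tau(a)$, and observe that their equality for all $a,b$ is precisely the image-commuting condition.
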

		
		\begin{proof} $\circ'=\circ^{\op}$ if and only if $a\circ'b=a\circ^{\op}b$ for every $a,b\in D$, that is, if and only if $a(\tau\pi_1+\sigma\pi_2)b=b\circ a$. This is equivalent to $\tau(a)+\sigma(b)=\sigma(b)+\tau(a)$.\end{proof}
		
		Notice that, as far as the involutive category automorphism described in Corollary~\ref{3.9} is concerned (it transforms the operation $\circ=\sigma\pi_1+\tau\pi_2$ into the operation $\circ'=\tau\pi_1+\sigma\pi_2$), we necessarily have that $\circ$ must be left skew $\sigma$-distributive and right skew $\tau$-distributive.
		
		\begin{example} {\em Interchange near-rings, associative interchange near-rings.} An {\em interchange near-ring} $(G,+,-,0_G,\circ)$ is a group $(G,+,-,0_G)$ with a further binary operation $\circ$ for which the {\em interchange law} \begin{equation}(w+x)\circ(y+z)=(w\circ y)+(x\circ z)\label{interchange}\end{equation} holds, for all $w,x,y,z\in G$ \cite{Australian}. For an interesting excursus on the history of the interchange law, see the Introduction of \cite{Australian}. An  interchange near-ring $(G,+,-,0_G,\circ)$ is an {\em associative} interchange near-ring if the operation $\circ$ is associative. 
		
		According to one of the main results of \cite{Australian} (Theorem~3.5), the interchange near-ring structures on any group $(G,+,-,0_G)$ are all of the form $(G,+,-,0_G, \varepsilon\pi_1+\eta\pi_2)$ for a pair $(\varepsilon,\eta)$ of image-commuting group endomorphisms of 	$(G,+,-,0_G)$. The interchange near-ring $(G,+,-,0_G, \varepsilon\pi_1+\eta\pi_2)$ is associative if, moreover, $\varepsilon$ and $\eta$ are commuting idempotent group endomorphisms of 	$(G,+,-,0_G)$ (Proposition~\ref{3.2}((b) and (c)) and \cite[Theorem~4.3]{Australian}).  
		
		\medskip
		
		Let $(T,+,-,0_T)$ be a group and $\sigma,\tau$ be two image-commuting idempotent endomorphisms of the additive group $T$, and consider a left ditruss $(T,+,-,0,\sigma, \circ,\cdot)$ with $\circ$ associative and $\lambda^T\colon (T,\circ)\to \End_\Gp(T,+)$ the constant semigroup morphism constantly equal to $\tau$. This simply means that the multiplication $\cdot$ is the operation $\tau\pi_2$, which is always associative, left weakly $\sigma$-associative, and left distributive. As a consequence, one has that $a\circ b=\sigma(a)+\tau(b)$. In particular $0_T\circ a=\tau(a)$ and $a\circ 0_T=\sigma(a)$. From associativity we know that $0_T\circ(a\circ 0_T)=(0_T\circ a)\circ 0_T$, that is $\sigma(\tau(a))=\tau(\sigma(a))$, so that the idempotent endomorphisms $\sigma$ and $\tau$ commute. From \cite[Theorem~4.3]{Australian}, we get that $(T,+,-,0_T,\circ)$ is an associative interchange near-ring.
		
		Conversely, if $(T,+,-,0_T,\circ)$ is an associative interchange near-ring, there exist two  image-commuting idempotent commuting endomorphisms $\sigma$ and $\tau$ of the group $(T,+,-,0_T,\circ)$ such that $a\circ b=\sigma(a)+\tau(b)$ for every $a,b\in T$. It follows that:
		
		\begin{theorem}\label{bjip} The following two categories are isomorphic:
		
		\noindent{\rm (a)} The category of all left ditrusses $(T,+,-,0,\sigma, \circ,\cdot)$ with $\circ$ associative, $$\lambda^T\colon (T,\circ)\to \End_\Gp(T,+)$$ a constant semigroup morphism, and $\sigma$ and $\lambda^D_0$ image-commuting idempotent endomorphisms of the group $(T,+,-,0)$.
		
		\noindent{\rm (b)} The category of all associative interchange near-rings.
\end{theorem}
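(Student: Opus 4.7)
The plan is to build mutually inverse functors $F\colon (\text{a})\to(\text{b})$ and $G\colon (\text{b})\to(\text{a})$, both acting as the identity on underlying sets and underlying maps. Consequently, the whole argument reduces to an object-level correspondence together with a verification that a morphism on one side is automatically a morphism on the other.

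For the functor $F$, let $(T,+,\sigma,\circ,\cdot)$ be an object of category (a). Because $\lambda^T\colon (T,\circ)\to\End_\Gp(T,+)$ is a constant semigroup morphism, $\lambda^T_a=\lambda^T_0=:\tau$ for every $a\in T$, so $a\cdot b=\tau(b)$, i.e.~$\cdot=\tau\pi_2$. Equation \eqref{skew} then forces $a\circ b=\sigma(a)+\tau(b)$, hence $\circ=\sigma\pi_1+\tau\pi_2$. I can now verify the interchange law directly: both $(w+x)\circ(y+z)=\sigma(w)+\sigma(x)+\tau(y)+\tau(z)$ and $(w\circ y)+(x\circ z)=\sigma(w)+\tau(y)+\sigma(x)+\tau(z)$ coincide thanks to the image-commutativity of $\sigma$ and $\tau$. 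Since $\circ$ is associative by hypothesis, $F(T,+,\sigma,\circ,\cdot):=(T,+,\circ)$ is an associative interchange near-ring.

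For the inverse $G$, given an associative interchange near-ring $(T,+,\circ)$, I invoke \cite[Theorem~3.5]{Australian} to obtain a decomposition $\circ=\varepsilon\pi_1+\eta\pi_2$ with $\varepsilon,\eta$ image-commuting group endomorphisms of $(T,+)$; this decomposition is unique via the formulas $\varepsilon(a)=a\circ 0_T$ and $\eta(b)=0_T\circ b$. Associativity of $\circ$, together with Proposition~\ref{3.2}(b) (whose hypothesis $\varepsilon(0)=\eta(0)=0$ is automatic since $\varepsilon,\eta$ are group homomorphisms), forces $\varepsilon$ and $\eta$ to be commuting idempotents. Setting $\sigma:=\varepsilon$, $\tau:=\eta$, and $\cdot:=\tau\pi_2$, one checks at once that $(T,+,\sigma,\circ,\cdot)$ belongs to category (a): the defining ditruss identity \eqref{skew} holds by construction, $\cdot$ is left distributive because $\tau$ is a group endomorphism, $\lambda^T_a=\tau$ is constant, and $\sigma$ together with $\lambda^T_0=\tau$ are image-commuting idempotent endomorphisms.

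Finally, I need to see that $F$ and $G$ also act as the identity on morphisms. A morphism in category (a) visibly preserves $+$ and $\circ$, hence is a morphism in (b). Conversely, if $f\colon (T,+,\circ)\to (T',+,\circ')$ preserves $+$ and $\circ$, then $f(\sigma(a))=f(a\circ 0_T)=f(a)\circ' 0_{T'}=\sigma'(f(a))$, and analogously $f(\tau(b))=\tau'(f(b))$, so $f$ preserves $\sigma$ and $\cdot=\tau\pi_2$ and is therefore a morphism in (a). The fact that $F\circ G$ and $G\circ F$ are the identity on objects follows from the same formulas $\sigma(a)=a\circ 0_T$, $\tau(b)=0_T\circ b$, which pin down $\sigma$ and $\tau$ uniquely in terms of $(+,\circ)$. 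The only non-routine ingredient is the structural result \cite[Theorem~3.5]{Australian}; after that, every step reduces to a direct evaluation at $0_T$ or to the already established Proposition~\ref{3.2}, and the principal care required is just to match the hypotheses on $\sigma$ and $\lambda^T_0$ in (a) with the image-commuting idempotent endomorphism description of associative interchange near-rings in \cite{Australian}.
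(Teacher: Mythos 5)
Your proposal is correct and follows essentially the same route as the paper: constancy of $\lambda^T$ forces $\circ=\sigma\pi_1+\tau\pi_2$ with $\tau=\lambda^T_0$, and Edmunds' classification of (associative) interchange near-rings supplies the converse. The only differences are that you verify the interchange law by direct computation where the paper cites \cite[Theorem~4.3]{Australian}, and you spell out the morphism-level and uniqueness checks (via $\sigma(a)=a\circ 0_T$, $\tau(b)=0_T\circ b$) that the paper leaves implicit.
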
		
		\end{example}
		
		Now if $(T,+,-,0_T,\circ)$ is an associative interchange near-ring and we replace the multiplication $\circ$ with its opposite operation $\circ^{\op}$, defined by $a\circ^{\op} b:=b\circ a$ for all $a,b\in T$, Identity (\ref{interchange}) becomes $(y+z)\circ^{\op} (w+x)=(y\circ^{\op} w)+(z\circ^{\op} x)$. It follows that if $(T,+,-,0_T,\circ)$ is an associative interchange near-ring, then $(T,+,-,0_T,\circ^{\op})$ is also an associative interchange near-ring. That is, the functor $(T,+,-,0_T,\circ)\mapsto(T,+,-,0_T,\circ^{\op})$ is an involutive categorical isomorphism of the category of all associative interchange near-rings into itself. 
		
		From Theorem~\ref{bjip} we get that the functor $(T,+,-,0,\sigma, \circ,\tau\pi_2)\mapsto (T,+,-,0,\tau, \circ,\sigma\pi_2)$  is an involutive categorical isomorphism of the category of all left ditrusses $(T,+,-,0,\sigma, \circ,\cdot)$ with $\circ$ associative, $\lambda^T\colon (T,\circ)\to \End_\Gp(T,+)$ a constant semigroup morphism constantly equal to $\tau$, and $\sigma,\tau$ image-commuting. 
		
				\bigskip
				
				Let $(T,+,-,0_T,\sigma, \circ,\cdot)$ be a left ditruss with $\sigma$ an idempotent additive group endomorphism of $(T,+)$, $\circ$ associative and $\cdot$ left distributive.  Since $\lambda^T_0$ is an idempotent group endomorphism of the additive group of $T$, we know that $\lambda^T_0$ corresponds to a semidirect-sum decomposition of the group $(T,+)$. That is, $$T_0:=\ker(\lambda^T_0)=\{\,a\in T\mid 0\cdot a=0\,\}=\{\,a\in T\mid 0\circ a=0\,\}$$ is a normal subgroup of $(T,+)$, $$T_c:=\lambda^T_0(T)=\{\,a\in T\mid 0\cdot a=a\,\}=\{\,a\in T\mid 0\circ a=a\,\}$$ is a subgroup of $(T,+)$, and $T$ is the semidirect sum $T=T_0\rtimes T_c$ as an additive group. We say that $T_0$ is the $0${\em -symmetric part }of the left ditruss $(T,+,-,0,\sigma,\circ,\cdot)$, and $T_c$ is its {\em constant part}. Both $T_0$ and $T_c$ are subditrusss of $(T,+,-,0,\sigma, \circ,\cdot)$, in the sense that they are additive subgroups of $(T,+)$ and are closed for the three operations $\sigma$, $\circ$ and $\cdot$.
						
						\bigskip
						
		A {\em left  weak truss} is an algebra $(W, +,-,0,\cdot,\sigma)$ in which

\noindent (1) $(W, +,-,0)$ is a (not-necessarily abelian) group;

\noindent (2) $\sigma$ is a unary operation;

\noindent (3) the binary operation $\cdot$ is  {\em left weakly $\sigma$-associative}, that is, $(\sigma(a)+a\cdot b)\cdot c=a\cdot(b\cdot c)$ for all $a,b,c\in W$; and 

\noindent (4) {\em left distributivity} holds, that is, $a(b+c)=ab+ac$ for every $a,b,c\in W$.

\begin{theorem}\label{main} The category of the left skew trusses $(T, +,-,0_T,\circ,\sigma)$ with $\sigma$ an idempotent group endomorphism of $(T, +,-,0_T)$ and the category of the left  weak trusses $(W, +,-,0,\cdot,\sigma)$  with $\sigma$ an idempotent group endomorphism of $(W, +,-,0_W)$
are canonically isomorphic. The canonical isomorphism associates to every left skew truss $(T, +,-,0_T,\circ,\sigma)$ with $\sigma$ an idempotent group endomorphism the left weak truss $(T, +,-,0_T,\cdot,\sigma)$, where $\cdot$ is the binary operation on $T$ defined setting $a\cdot b=-\sigma(a)+a\circ b$ for all $a,b\in T$, and associates to each left skew truss morphism $f\colon T\to T'$ the same mapping~$f$. \end{theorem}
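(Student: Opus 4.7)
The plan is to exhibit explicit mutually inverse functors $F$ and $G$ between the two categories and then verify that the axioms transfer correctly. On objects, define $F(T,+,-,0_T,\circ,\sigma)=(T,+,-,0_T,\cdot,\sigma)$ where $a\cdot b:=-\sigma(a)+a\circ b$, and define $G(W,+,-,0_W,\cdot,\sigma)=(W,+,-,0_W,\circ,\sigma)$ where $a\circ b:=\sigma(a)+a\cdot b$. On morphisms, both $F$ and $G$ act as the identity on underlying set-maps.

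First I would check that $F$ is well-defined on objects. Starting from a left skew truss with $\sigma$ an idempotent group endomorphism of $(T,+)$, the definition $a\cdot b=-\sigma(a)+a\circ b$ rearranges to $\sigma(a)+a\cdot b=a\circ b$, so $(T,+,-,0_T,\sigma,\circ,\cdot)$ is automatically a left ditruss. Left skew $\sigma$-distributivity of $\circ$ yields left distributivity of $\cdot$ by Theorem~\ref{new}, and associativity of $\circ$ together with $\sigma$ being an idempotent group endomorphism yields left weak $\sigma$-associativity of $\cdot$ by Theorem~\ref{2.3'}(c). These are exactly the defining axioms of a left weak truss.

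Next, I would check $G$ is well-defined on objects in the symmetric way: starting from a left weak truss, setting $a\circ b=\sigma(a)+a\cdot b$ makes $(T,+,-,0_T,\sigma,\circ,\cdot)$ a left ditruss by construction; then left distributivity of $\cdot$ gives left skew $\sigma$-distributivity of $\circ$ by Theorem~\ref{new}, and left weak $\sigma$-associativity of $\cdot$ (together with the idempotent-endomorphism hypothesis on $\sigma$) gives associativity of $\circ$ by Theorem~\ref{2.3'}(c). That $F$ and $G$ are mutual inverses on objects is then immediate: $\sigma(a)+(-\sigma(a)+a\circ b)=a\circ b$ and $-\sigma(a)+(\sigma(a)+a\cdot b)=a\cdot b$, and the group $(T,+,-,0_T)$ and the unary operation $\sigma$ are untouched by either direction.

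For morphisms, since $\cdot$ is expressible from $(+,-,\sigma,\circ)$ and $\circ$ is expressible from $(+,\sigma,\cdot)$, any map $f\colon T\to T'$ preserving the first tuple automatically preserves the second, and vice versa, so the two notions of morphism coincide; functoriality (identity and composition) is then trivial. The only genuine content that needs care is the equivalence of associativity of $\circ$ with left weak $\sigma$-associativity of $\cdot$, which is precisely where the hypothesis that $\sigma$ be an idempotent group endomorphism enters (so that Theorem~\ref{2.3'}(c) applies); everything else is a direct translation via $a\circ b=\sigma(a)+a\cdot b$. I expect this step to be the main conceptual point rather than a computational obstacle, because the two preceding theorems have already done the heavy lifting.
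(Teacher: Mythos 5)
Your proposal is correct and follows essentially the same route as the paper's proof: the same pair of mutually inverse functors, with well-definedness on objects obtained from Theorems~\ref{new} and~\ref{2.3'}(c) and the morphism identification justified by the interdefinability of $\circ$ and $\cdot$ via $\sigma$ and $+$. No substantive difference from the published argument.
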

		
	\begin{proof} Let $(T, +,\circ,\sigma)$ be a left skew truss with $\sigma$ an idempotent group endomorphism of $(T, +)$. Let $\cdot$ be the binary operation on $T$ defined setting $a\cdot b=-\sigma(a)+a\circ b=\lambda_a(b)$ for all $a,b\in T$, so that $(T, +,\sigma,\circ,\cdot)$ is a left ditruss in which $\circ$ is associative and $\cdot$ is left distributive. Then  $(T, +,\cdot,\sigma)$ is a left weak truss by Theorems~\ref{new} and \ref{2.3'}(c). Notice that a mapping $f\colon T\to T'$, $T'$ a left skew truss, respects $+$, $\circ$ and $\sigma$ if and only if it respects any two of the three operations $+$, $\circ$ and $\sigma$. Thus we have a canonical functor $(T, +,\circ,\sigma)\mapsto (T', +,\cdot,\sigma)$ of the category of the left skew trusses $(T, +,-,0_T,\circ,\sigma)$ with $\sigma$ an idempotent group endomorphism into the category of the left weak trusses $(W, +,-,0,\cdot,\sigma)$  with $\sigma$ an idempotent group endomorphism of the group $(W, +,-,0)$.
		
	Now let $(W, +,\cdot,\sigma)$ be a left weak truss with $\sigma$ an idempotent group endomorphism, and let $\circ$ be the binary operation on $W$ defined setting $a\circ b=\sigma(a)+a\cdot b$ for every $a,b\in W$. The operation $\circ$ is associative by Theorem~\ref{2.3'}(c) and left skew $\sigma$-distributive by Theorem~\ref{new}. Therefore $(W,+,\circ,\sigma)$ is a left skew truss, and we get a canonical functor $(W, +,\cdot,\sigma)\mapsto (W,+,\circ,\sigma)$ of  the category of the left weak trusses $(W, +,-,0_W,\circ,\sigma)$ with $\sigma$ an idempotent group endomorphism into the category of the left skew trusses $(T, +,-,0,\cdot,\sigma)$  with $\sigma$ an idempotent group endomorphism of the group $(T, +,-,0)$. This functor  is clearly an inverse of the functor described in the previous paragraph.
		\end{proof}
					
	\begin{examples}\label{3.10}{\rm As far as the ditrusses considered in Corollary~\ref{3.9} are concered, let us give three examples with three special cases.
		
		(1) The case $\sigma=\tau$. In this case our difference of operations $\sigma\pi_1=\circ-\cdot$ becomes $\sigma\pi_1=(\sigma\pi_1+\sigma\pi_2)-\sigma\pi_2$, with $\sigma$ an idempotent group endomorphism (so that $\circ$ is associative).  Then we have the left skew truss $(G,+,\circ,\sigma)$, where $\circ$ is defined by $a\circ b=\sigma(a+b)$. Its corresponding left weak truss is $(G,+,\sigma\pi_2,\sigma)$.
		
		(2) The case $\sigma=0$ and $\tau=\id_G$. Recall that, for $\sigma=0$, left skew trusses are exactly left near-rings (Example~(1) in Subsection~\ref{SS2.1}). In this case our difference of operations $\sigma\pi_1=\circ-\cdot$ is $0=\pi_2-\pi_2$, that is $\circ=\cdot=\pi_2$. In this case the left skew truss (=left near-ring) $(G,+,\pi_2,0)$ corresponds to the left weak truss $(G,+,\pi_2,\id_G)$.
		
		(3)  The case $\sigma=\id_G$ and $\tau=0$. Recall that, for $\sigma=\id_G$, left skew trusses are exactly left skew rings. In this case the difference of operations $\sigma\pi_1=\circ-\cdot$ is $\pi_1=\pi_1-\circ_0$, so that $\circ=\pi_1$ and $\cdot=\circ_0$. Thus the left skew truss (=left skew ring) $(G,+,\pi_1)$ corresponds to the left weak truss $(G,+,\circ_0)$.}\end{examples}
					
		\begin{examples}\label{mmm} 	Let  $(G,+)$ be a group. Let us go back to the triples of operations $(\sigma,\circ,\cdot)$ on $G$ with $\sigma$ an idempotent group endomorphism of $(G,+)$, $\circ$ binary and associative, $\cdot$ binary and left distributive, and $\sigma\pi_1=\circ-\cdot$ in the group of all binary operations on $G$. We already know that this implies that $\circ$ must be necessarily left skew $\sigma$-distributive and $\cdot$ must be left weakly $\sigma$-associative (Theorems~\ref{new} and \ref{2.3'}(c)).
Here are two examples.

\noindent(1) Let  $(G,+)$ be a group and $\sigma,\tau$ be two commuting idempotent group endomorphisms of $(G,+)$. For instance $\tau$ could be the identity $\id_G\colon G\to G$, or the zero mapping $G\to G$, or the idempotent endomorphism $\sigma$ itself, like in Examples~\ref{3.10}. Consider the difference $\sigma\pi_1=(\sigma\pi_1+\tau\pi_2)-\tau\pi_2$. The operation $\tau\pi_2$ is associative, left weakly $\sigma$-associative and left distributive. Thus $(G,+,\tau\pi_2)$ is a left near-ring and $(G,+,\tau\pi_2,\sigma)$ is a left weak truss.  The left skew truss corresponding to the left weak truss $(G,+,\tau\pi_2,\sigma)$ is $(G,+,\sigma\pi_1+\tau\pi_2,\sigma)$ (cf.~ Lemma~\ref{lemma}). The semigroup morphism $\lambda$ for this left skew truss is the constant morphism equal to $\tau$. Therefore, for the ditruss $(G,+,\sigma,\sigma\pi_1+\tau\pi_2,\tau\pi_2)$, we have that $G_0=\ker(\tau)$ and $G_c=\tau(G)$.

\noindent(2) Let  $(G,+)$ be a group and $\sigma,\tau$ be two commuting idempotent group endomorphisms of $(G,+)$. Consider the equality $\sigma\pi_1=(\tau\pi_2+\sigma\pi_1)-(-\sigma\pi_1+\tau\pi_2+\sigma\pi_1)$. Notice that $\tau\pi_2+\sigma\pi_1=(\tau\pi_1+\sigma\pi_2)^{\op}$, so that $\tau\pi_2+\sigma\pi_1$ is associative if and only if $\tau\pi_1+\sigma\pi_2$ is associative, which is true by Proposition~\ref{3.2}(c). Also $a(-\sigma\pi_1+\tau\pi_2+\sigma\pi_1)b$ is the conjugate $\tau(b)^{\sigma(a)}$ in the group $(G,+)$. Correspondingly, we have the left skew truss $(G,+,(\tau\pi_2+\sigma\pi_1),\sigma)$ and the left weak truss $(G,+,\cdot,\sigma)$ with $a\cdot b=-\sigma(a)+\tau(b)+\sigma(a)$ for every $a,b\in G$ (the conjugate of $\tau(b)$ via $\sigma(a)$). In this case, $\lambda_0=\tau$. The semigroup morphism $\lambda$ is constant if and only if $\lambda_a=\tau$ for every $a$, that is, if and only if $\tau(b)^{\sigma(a)}=\tau(b)$ for every $a,b\in G$, that is, if and only if $\sigma$ and $\tau$ are image-commuting, that is, in the case of the associative interchange near-rings considered in Theorem~\ref{bjip}. Notice that:

(a) $\sigma$ and $\id_G$ are image-commuting if and only if $\sigma(G)\subseteq Z(G)$, the center of $G$. Let us  prove that $\sigma(G)\subseteq Z(G)$ if and only if $(G,+)$ has a direct-sum decomposition $G=A\oplus B$, where $A$ is an abelian normal subgroup of $G$, $B$ is a normal subgroup of $G$, and $\sigma\colon G\to G$ is the group endomorphism defined by $\sigma(a+b)=a$ for every $a\in A$ and $b\in B$. In fact, assume that $\sigma(G)\subseteq Z(G)$. To the idempotent endomorphism $\sigma$, there corresponds a semidirect-product decomposition $G=\sigma(G)\ltimes\ker(\sigma)$, hence an action $\alpha\colon\sigma(G)\to\Aut(\ker(\sigma))$, $\alpha\colon a\in \sigma(G)\mapsto\,$conjugation by $a$. Since $\sigma(G)\subseteq Z(G)$, conjugation by an element $a\in\sigma(G)$ is the identity mapping of $\ker(\sigma)$. Therefore the elements of $\sigma(G)$ and those of $\ker(\sigma)$ commute, so that we have a direct-sum decomposition $G=\sigma(G)\oplus\ker(\sigma)$ of $G$. Moreover $\sigma(G)\subseteq Z(G)$ implies that $\sigma(G)$ is an abelian group. Set $A:=\sigma(A)$ and $B:=\ker(\sigma)$, so that $A$ and $B$ are normal subgroups of $G$, $A$ is abelian and, clearly, $\sigma\colon G\to G$ is the group endomorphism defined by $\sigma(a+b)=a$ for every $a\in A$ and $b\in B$. 

(b) $\sigma$ and $\sigma$ are image-commuting if and only if $\sigma(G)$ is abelian.

(c) $\sigma$ and the zero endomorphism of $G$  are always image-commuting.\end{examples}

We conclude with a characterization of $0$-symmetric left skew trusses.






\begin{proposition} Let $(T,+,\circ,\sigma)$ be a left skew truss with $\sigma\colon T\to T$ any mapping such that $\sigma(0_T)=0_T$. The following conditions are equivalent:

{\rm (a)} The left skew truss $(T,+,\circ,\sigma)$ is $0$-symmetric.

{\rm (b)}  $\sigma(a)\circ b=\sigma(a)$ for every $a,b\in T$.\end{proposition}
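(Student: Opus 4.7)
The plan is to use the two key identities from Theorem~\ref{2.3}: namely $a\circ 0_T=\sigma(a)$ for every $a\in T$, and, under the hypothesis $\sigma(0_T)=0_T$, the equality $0_T\circ a=\lambda^T_{0_T}(a)$ for every $a\in T$. Recall that $0$-symmetric means $T=T_0$, i.e., $0_T\circ a=0_T$ for every $a\in T$.

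For the implication (a)${}\Rightarrow{}$(b), I would start from an arbitrary $a,b\in T$, rewrite $\sigma(a)$ as $a\circ 0_T$ (by Theorem~\ref{2.3}(b)), and then use associativity of $\circ$ together with the $0$-symmetry hypothesis $0_T\circ b=0_T$:
\[
\sigma(a)\circ b=(a\circ 0_T)\circ b=a\circ(0_T\circ b)=a\circ 0_T=\sigma(a).
\]

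For the converse (b)${}\Rightarrow{}$(a), I would specialize the identity $\sigma(a)\circ b=\sigma(a)$ to $a=0_T$. Since by hypothesis $\sigma(0_T)=0_T$, this immediately yields $0_T\circ b=0_T$ for every $b\in T$, which is exactly the definition of $0$-symmetric.

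There is no real obstacle here: the statement is essentially a direct unwinding of the definitions once Theorem~\ref{2.3}(b) is available, and the role of the hypothesis $\sigma(0_T)=0_T$ is only to make the converse direction collapse to a single substitution.
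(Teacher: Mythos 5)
Your proof is correct, but it follows a genuinely different and more elementary route than the paper's. The paper passes to the associated left ditruss $(T,+,\sigma,\circ,\cdot)$ and works with the auxiliary operation $\cdot$: for (a)$\Rightarrow$(b) it invokes left weak $\sigma$-associativity in the form $(\sigma(a)+a\cdot 0)\cdot b=a\cdot(0\cdot b)$ to get $\sigma(a)\cdot b=0$, and then uses the idempotence of $\sigma$ (Theorem~\ref{2.3}(c)) together with $\sigma(x)=x\circ y-x\cdot y$ to convert this into $\sigma(a)\circ b=\sigma(a)$; the converse likewise goes through $\sigma(a)\cdot b=0$ and specializes to $a=0_T$. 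You instead stay entirely inside the semigroup $(T,\circ)$, using only $a\circ 0_T=\sigma(a)$ from Theorem~\ref{2.3}(b) and associativity: the chain $\sigma(a)\circ b=(a\circ 0_T)\circ b=a\circ(0_T\circ b)=a\circ 0_T=\sigma(a)$ is valid, and the converse by substituting $a=0_T$ is immediate. Your argument is shorter and avoids the ditruss machinery altogether; what the paper's version buys is the intermediate identity $\sigma(a)\cdot b=0$, i.e., the statement that the whole image of $\sigma$ acts as zero for the derived multiplication $\cdot$, which is itself an informative reformulation of $0$-symmetry in the ditruss language. One small caveat on your closing remark: the hypothesis $\sigma(0_T)=0_T$ is not needed only for the converse; it is also what makes condition (a) meaningful as stated, since the $0$-symmetric part $T_0=\ker(\lambda^T_{0_T})$ coincides with $\{a\in T\mid 0_T\circ a=0_T\}$ precisely under that hypothesis.
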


\begin{proof} {\rm (a)}${}\Rightarrow{}${\rm (b)} Let $(T,+,\circ,\sigma)$ be a $0$-symmetric left skew truss, and let $(T,+,\sigma,\circ,\cdot)$ be the corresponding left ditruss. Then $\circ$ is associative and $0\cdot b=0$ for every $b\in T$, that is $\lambda_0=0$ ($0$ is a two-sided zero for the magma $(T,\cdot)$). Since $\cdot$ is weakly $\sigma$-associative, we have that $(\sigma(a)+a\cdot 0)\cdot b=a\cdot(0\cdot b)$ for every $a,b\in T$. Therefore $\sigma(a)\cdot b=0$. Now $\sigma$ is an idempotent mapping by Theorem~\ref{2.3}(c), so the equality $\sigma(x)=x\circ y-x\cdot y$ implies that $\sigma(a)=\sigma(a)\circ b$.

{\rm (b)}${}\Rightarrow{}${\rm (a)} We know that $\sigma$ is an idempotent mapping by Theorem~\ref{2.3}(c). Thus the identity $\sigma(x)=x\circ y-x\cdot y$ implies that $\sigma(a)=\sigma(a)\circ b-\sigma(a)\cdot b$ for every $a,b\in T$. Hence (b) yields that $\sigma(a)\cdot b=0$. For $a=0$, we get that $0\cdot b=0$, so $T$  is $0$-symmetric.
\end{proof}

\end{document}